\documentclass[11pt]{amsart}

\usepackage{stmaryrd}
\usepackage{amscd,amsxtra,amssymb,mathrsfs, bbm}

\usepackage{multirow}
\usepackage[pdftex]{hyperref}
\usepackage{longtable}
\usepackage{tikz}
\usepackage{float}
\usepackage{color}
\usetikzlibrary{calc, positioning, shapes, fit, matrix, decorations}
\usetikzlibrary{decorations.shapes}

\usepackage[cmtip]{xypic}
\usepackage[all]{xy}
\xyoption{arc}

\newtheorem{theorem}{Theorem}[section]
\newtheorem{corollary}[theorem]{Corollary}
\newtheorem{lemma}[theorem]{Lemma}
\newtheorem{proposition}[theorem]{Proposition}

\theoremstyle{definition}
\newtheorem{definition}[theorem]{Definition}
\newtheorem{remark}[theorem]{Remark}
\newtheorem{example}[theorem]{Example}

\numberwithin{equation}{section}

\DeclareMathAlphabet{\mathpzc}{OT1}{pzc}{m}{it}

\DeclareMathOperator{\SL}{\mathsf{SL}}
\DeclareMathOperator{\SO}{\mathsf{SO}}

\renewcommand{\dim}{\mathsf{dim}}

\DeclareMathOperator{\Rep}{\mathsf{Rep}}
\DeclareMathOperator{\RepQ}{\mathsf{RepQ}}

\DeclareMathOperator{\HC}{\mathsf{HC}}

\DeclareMathOperator{\Hom}{\mathsf{Hom}}

\DeclareMathOperator{\GL}{\mathsf{GL}}
\DeclareMathOperator{\Aut}{\mathsf{Aut}}

\DeclareMathOperator{\End}{\mathsf{End}}

\DeclareMathOperator{\Spec}{\mathsf{Spec}}

\input xy
\xyoption{all}

\newcommand{\kk}{\mathbbm{k}}

\newcommand{\llbrace}{(\!(}
\newcommand{\rrbrace}{)\!)}

\newcommand{\II}{\mathbb{I}}

\renewcommand{\mod}{\mathsf{mod}}

\newcommand{\lar}{\longrightarrow}
\newcommand{\lieg}{\mathfrak{g}}

\newcommand{\RR}{\mathbb R}
\newcommand{\CC}{\mathbb C}

\newcommand{\cC}{\mathsf{C}}
\renewcommand{\cD}{\mathsf{D}}

\newcommand{\Ad}{\mathsf{Ad}}

\newcommand{\idm}{\mathfrak{m}}

\newcommand{\EE}{\mathbb{E}}

\newcommand{\ZZ}{\mathbb{Z}}

\newcommand{\DD}{\mathbb{D}}

				\def\ga{\gamma}

\def\8{\infty}			
	\def\+{\oplus}		
\def\*{\otimes}

\def\ga{\gamma}

	\def\NN{\mathbb N}

\def\DMO{\DeclareMathOperator}
\DMO{\ob}{Ob}            \DMO{\mor}{Mor}
\DMO{\Ker}{Ker}
\DMO{\id}{Id}

\newcommand{\rightarrowdbl}{\longrightarrow\mathrel{\mkern-14mu}\rightarrow}

\title[Representation theory of the real  Gelfand order]{Representation theory of the real Gelfand order and real Harish-Chandra modules for $\SL_2(\RR)$}

\author{Igor Burban}
\address{
Universit\"at Paderborn,
Institut f\"ur Mathematik,
Warburger Strasse 100,
33098 Paderborn,
Germany
}
\email{burban@math.uni-paderborn.de} 

\author{Yuriy Drozd}
\address{
Institute of Mathematics, National Academy of
Sciences of Ukraine, Tereschenkivska str. 3, 01024 Kyiv, Ukraine}
\email{y.a.drozd@gmail.com}

\subjclass[2010]{Primary 16E60, 16G60, 14A22, 16S38}
\keywords{nodal orders, Harish-Chandra modules, $\mathsf{SL}_2(\RR)$, Gelfand problem}

\begin{document}

\begin{abstract} In this article we study the principal block of the category of real Harish-Chandra modules for the group $\mathsf{SL}_2(\RR)$ and relate it to the category of finite dimensional modules over the so-called real Gelfand order. We describe several distinguished classes of the corresponding indecomposable representations.
\end{abstract}
\maketitle

\section{Introduction} 
Let $\kk$ be a field, $R = \kk\llbracket t\rrbracket$ be the algebra of formal power series with coefficients in $\kk$  and $\idm = (t)$ be its maximal ideal. 

In his ICM talk in 1970, I.~Gelfand \cite{Gelfand} raised the question of giving an explicit description of the indecomposable finite dimensional nilpotent representations of the quiver
\begin{equation}\label{E:GelfandQuiver}
\xymatrix
{
- \ar@/^/[rr]^{a_{-}}  & &  \star \ar@/^/[ll]^{b_{-}}
 \ar@/_/[rr]_{b_{+}}
 & &
\ar@/_/[ll]_{a_{+}} +}  \quad a_{-} b_{-} = a_{+} b_{+}.
\end{equation}
The motivation for studying this problem comes from the fact that for $\kk = \CC$, the category of such representations of
(\ref{E:GelfandQuiver}) is equivalent to the principal block of the category of \emph{complex} Harish-Chandra modules corresponding to the group
$\SL_2(\RR)$.

The category of finite dimensional nilpotent representations of the quiver (\ref{E:GelfandQuiver}) is equivalent to the 
category $\Rep(O)$ of finite dimensional modules over the so-called \emph{Gelfand order}
\begin{equation}\label{E:GelfandOrder}
O =
\left[\begin{array}{ccc}
R & \idm & R\\
\idm & R & R \\
\idm & \idm & R
\end{array}
\right]
\end{equation}
Nazarova and Roiter showed in \cite{NazRoi} that the category $\Rep(O)$ is of tame representation type. A complete classification of the corresponding indecomposable objects was obtained independently by Bondarenko \cite{mp1, mp} and Crawley-Boevey \cite{CB}. In \cite{Nodal} the authors showed that the derived category $D^b\bigl(\Rep(O)\bigr)$ is also of tame representation type. Using this approach Burban and Gnedin recently obtained an explicit description of the indecomposable objects of $\Rep(O)$ in \cite{BurbanGnedin}.

From now on, let $\kk = \RR$.  The \emph{real Gelfand order} $A$ is defined as follows:
\begin{equation}\label{E:RealGelfand}
H = \left[\begin{array}{ccc}
R  &  R  & \idm\\
R  &  R   &  \idm \\
R  &  R & R
\end{array}
\right] \supset A = 
\left\{\left. 
\left(\begin{array}{ccc} a_{11} & a_{12} & a_{13} \\ a_{21} & a_{22} & a_{23} \\ a_{31} & a_{32} & a_{33}
\end{array}\right) \in  H \; \right|\;  \bar{a}_{11}= \bar{a}_{22},  \bar{a}_{21} = - \bar{a}_{12}\right\},
\end{equation}
where we set  $\bar{p} = p(0) \in \RR$ for any  $p \in R$.
It can be shown that $\CC \otimes_{\RR} A$ is isomorphic to the Gelfand order $O$. Moreover, $\Rep(A)$ is equivalent to the principal block of the category of \emph{real} Harish-Chandra modules associated with the group $\SL_2(\RR)$; see Theorem \ref{T:BlockDescr}.
The main results of this work are the following.  We explicitly describe 
\begin{enumerate}
\item[(a)] All Schurian $A$-modules; see Theorem \ref{T:Schurian}.
\item[(b)] All absolutely cyclic $A$-modules; see Theorem \ref{T:AbsCyclic}.
\end{enumerate} 
We also explain several well-known constructions concerning Galois descent in the categorical setting in Section \ref{SC:CatConstr}.

\smallskip
\noindent
\emph{Acknowledgement}. This work  was partially  supported by the  German Research Foundation SFB-TRR 358/1 2023 -- 491392403. The first-named author is grateful to Bill Crawley-Boevey for communicating him another proof of  Lemma \ref{L:MoritaEq} and to Fabian Januszweski for helpful discussion of the arithmetic theory of Harish-Chandra modules. 

\section{Representation of the real Gelfand order as a problem of linear algebra}

In what follows $A$ is the real Gelfand order, that is the $\RR$-algebra defined by  (\ref{E:RealGelfand}). We note the following facts.
\begin{itemize}
\item[(a)] $H$ is a \emph{hereditary order};  see \cite{ReinerMO} for the definition and basic facts about orders and, in particular, about hereditary orders. 
\item[(b)] $
J = 
\left[\begin{array}{ccc}
R  &  R  & \idm\\
R  &  R   &  \idm \\
R  &  R & R
\end{array}
\right] = \mathsf{rad}(A) = \mathsf{rad}(H)
$ is the common Jacobson radical of the orders  $A$ and $H$.
\item[(c)] We have: $A/J \cong \CC \times \RR$ and $H/J \cong M_2(\RR) \times \RR$. The embedding 
$A/J \lar H/J$  can be identified with $\CC \times \RR \xrightarrow{\varrho \times \mathsf{id}} M_2(\RR) \times \RR$, where $\CC \stackrel{\varrho}\lar M_2(\RR)$ is given by $\alpha + i\beta \mapsto \left(\begin{array}{cc} \alpha & - \beta \\ \beta & \alpha \end{array}\right)$ for any $\alpha, \beta \in \RR$. 
\end{itemize}
These properties show that $A$ is a \emph{real} \emph{nodal order}; see \cite{NodalFirst, NodalOrders} for the definition and main properties of this class of $\RR$-algebras. In what follows  $\Rep(A)$ denote  the category of finite dimensional $A$-modules.

\begin{definition} We consider the following category $\RepQ(A)$.

\smallskip
\noindent
(I) Its objects are diagrams \begin{equation}\label{E:LinData}
\xymatrix
{
V \ar@/^/[rr]^-{\left(\begin{smallmatrix} Y_1 \\Y_2\end{smallmatrix}\right)}  & &  U \oplus U \ar@/^/[ll]^-{\left(\begin{smallmatrix} X_1  & X_2\end{smallmatrix}\right)}
 }
\end{equation}
where $V$ and $U$ are finite dimensional vector spaces over $\RR$ and 
$V \stackrel{X_i}\lar U$, $U \stackrel{Y_j}\lar V$, $1 \le i, j \le 2$ are linear maps  such that 
\begin{enumerate}
\item[(a)] $X_1 Y_1 = X_2 Y_2$ is nilpotent.
\item[(b)] $X_1 Y_2 = 0 = X_2 Y_1$. 
\end{enumerate}
 We call the pair $\bigl(\dim_\RR(U),\dim_\RR(V)\bigr)$ the \emph{dimension vector} of this representation.

\smallskip
\noindent
(II) A morphism 
\begin{equation*}
\left[\xymatrix
{
V \ar@/^/[rr]^-{\left(\begin{smallmatrix} Y_1 \\Y_2\end{smallmatrix}\right)}  & &  U \oplus U \ar@/^/[ll]^-{\left(\begin{smallmatrix} X_1  & X_2\end{smallmatrix}\right)}
 }
 \right] \xrightarrow{(S, T)}
 \left[\xymatrix
{
V' \ar@/^/[rr]^-{\left(\begin{smallmatrix} Y'_1 \\Y'_2\end{smallmatrix}\right)}  & &  U' \oplus U' \ar@/^/[ll]^-{\left(\begin{smallmatrix} X'_1  & X'_2\end{smallmatrix}\right)}
 }
 \right] 
\end{equation*} 
is given by a pair of linear maps $V \stackrel{S}\lar V'$ and $U \oplus U \stackrel{T}\lar U' \oplus U'$ such that
$T = \left(\begin{array}{cc} T_1 & -T_2 \\ T_2 & T_1\end{array}\right)$ for some linear maps $U \stackrel{T_k}\lar U',k = 1, 2$ and the following diagrams of vector spaces and linear maps 
\begin{equation}
\begin{array}{ccc}
\xymatrix
{
V \ar[rr]^-{\left(\begin{smallmatrix} Y_1 \\Y_2\end{smallmatrix}\right)} \ar[d]_-{S} & &  U \oplus U \ar[d]^-{T}\\
V' \ar[rr]^-{\left(\begin{smallmatrix} Y'_1 \\Y'_2\end{smallmatrix}\right)}  & &  U' \oplus U'
 } & \begin{array}{c} \\ \\ \\ \mbox{\rm and} \\ \end{array} & 
 \xymatrix
{
V  \ar[d]_-{S} & &  U \oplus U \ar[d]^-{T^\ddagger} \ar[ll]_-{\left(\begin{smallmatrix} X_1 & X_2\end{smallmatrix}\right)}\\
V'   & &  U' \oplus U' \ar[ll]^-{\left(\begin{smallmatrix} X'_1 & X'_2\end{smallmatrix}\right)}
 } 
 \end{array}
\end{equation}
are commutative, where $T^\ddagger = \left(\begin{array}{cc} T_1 & T_2 \\ -T_2 & T_1\end{array}\right)$.

\smallskip
\noindent
(III) The composition of morphisms is given by the composition of the corresponding linear maps. 
\end{definition}

\begin{proposition}\label{GelfandOrderLAProblem} The categories $\Rep(A)$ and $\RepQ(A)$ are equivalent. 
\end{proposition}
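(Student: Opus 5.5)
The plan is to identify the block $\RepQ(A)$ data directly inside a finite dimensional $A$-module, using the two idempotents of $A$ and a small set of generators of $A$. Let $e = E_{11}+E_{22}$ and $e_3 = E_{33}$. Both lie in $A$: their diagonal residues satisfy $\bar a_{11}=\bar a_{22}$ and $\bar a_{21}=-\bar a_{12}=0$. The element $w = E_{21}-E_{12}$ also lies in $A$ and satisfies $w^2=-e$. Note that $E_{11}$ and $E_{22}$ themselves are \emph{not} in $A$, which is exactly why the pair $U\oplus U$ has to be produced by a choice rather than canonically. For $M\in\Rep(A)$ put $V=e_3M$ and $W=eM$. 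Then $w$ makes $W$ a complex vector space, so $W$ has even real dimension. Choosing a real form $U\subset W$ with $W = U\oplus wU$ identifies $W\cong U\oplus U$ with $w$ acting as $\left(\begin{smallmatrix}0&-1\\1&0\end{smallmatrix}\right)$.

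Next I will check that $A$ is generated as a topological $\RR$-algebra by $e,e_3,w,x=E_{31},y=tE_{13}$. The key identities are
\[
E_{32}=-xw,\qquad tE_{23}=wy,\qquad tE_{11}=yx,\qquad tE_{22}=-wyxw,\qquad tE_{33}=xy,
\]
together with analogous ones for $tE_{12},tE_{21}$. In particular $t=yx-wyxw+xy$. Using these, every element of $A$ is a power series in these generators, and the relations among them are
\begin{itemize}
\item[(i)] $e,e_3$ are orthogonal idempotents summing to $1$;
\item[(ii)] $w\in eAe$ with $w^2=-e$;
\item[(iii)] $x\in e_3Ae$ and $y\in eAe_3$;
\item[(iv)] $xwy=0$, which expresses $E_{31}\cdot tE_{23}=0$.
\end{itemize}
Putting $X_1=x|_W$, $X_2=-xw|_W$, $Y_1=y|_V$ and $Y_2=wy|_V$, and writing them in the decomposition $W=U\oplus U$, gives a diagram (\ref{E:LinData}). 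The identities $X_1Y_1=xy=X_2Y_2$ (using $-xw\,wy=xy$) and $X_1Y_2=xwy=0=X_2Y_1$ follow from (iv). Nilpotency of $X_1Y_1=t|_V$ follows from finite dimensionality: for $\lambda\neq 0$ the element $t-\lambda$ is a unit in $R\subset A$, so $t$ has only the eigenvalue $0$ on $M$. Conversely, given a diagram in $\RepQ(A)$, I define the action of the generators on $M=V\oplus U\oplus U$ by the same formulas. Since $t$ acts nilpotently on $M$, the power series converge, so the action extends from the free algebra to $A$. The one nontrivial point is to verify that the relations (i)--(iv) present $A$ completely, in its $t$-adically completed sense.

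For morphisms, an $A$-linear map $f\colon M\to M'$ restricts to $S=f|_V$ and $T=f|_W$. Commutation with $w$ forces $T=\left(\begin{smallmatrix}T_1&-T_2\\T_2&T_1\end{smallmatrix}\right)$, and commutation with $y$ is the first square. Commutation with $x$ says $X_1'T=SX_1$, and then automatically $X'_2T=SX_2$. Rewritten in terms of the row $(X_1\;X_2)$, which corresponds to $(x,-xw)$, this becomes the second square with $T^\ddagger$. I expect the sign bookkeeping to produce precisely $T^\ddagger$, and I will verify it by a direct $2\times 2$ block computation. Different choices of real form $U$ give isomorphic objects, so the construction is a functor up to natural isomorphism. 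Fullness and faithfulness are clear, because a morphism commuting with the generators commutes with all of $A$ by continuity. Essential surjectivity comes from the converse construction above.

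The main obstacle is the presentation step: showing that $e,e_3,w,x,y$ subject to (i)--(iv) generate $A$ and that no further relations hold on nilpotent modules. I would first prove this by comparing graded pieces, computing $J^n/J^{n+1}$ for $J=\mathsf{rad}(A)$ and matching its dimension with that of the path algebra modulo (iv). If that comparison becomes messy, the fallback is to check directly that the action defined on $V\oplus U\oplus U$ is well defined on every matrix entry of (\ref{E:RealGelfand}).
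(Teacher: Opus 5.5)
\textbf{The gap is in the translation step, and it cannot be closed.} Your route is essentially the paper's: split $eM\cong U\oplus U$ using $w=\jmath$ and read off four maps. It breaks at ``writing them in the decomposition $W=U\oplus U$ gives a diagram (\ref{E:LinData})''.

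Your $X_1=x|_W$ and $X_2=-xw|_W$ are maps $W\to V$, and $Y_1=y|_V$, $Y_2=wy|_V$ are maps $V\to W$. The identities you check for them are just $x_1y_1=x_2y_2$ and $x_1y_2=0=x_2y_1$ in $A$. These hold in every $A$-module, and they are not data $X_i\colon U\to V$, $Y_j\colon V\to U$.

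If you really decompose, writing $x|_W=(P\;\;Q)$ and $y|_V=\binom{R}{S}$ with $w=\left(\begin{smallmatrix}0&-1\\1&0\end{smallmatrix}\right)$, then relation (iv) reads $QR=PS$, and $t|_V=PR+QS$. That is a single matrix equation, not the three equations of (a) and (b).

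This is not a sign problem. Take $M=\mathsf{Cok}(L\xrightarrow{t^2}Q)$ from Theorem \ref{T:AbsCyclic}(I)(b). It has $\dim U=1$, $\dim V=2$, and $x|_W\neq 0\neq y|_V$ (even $xy\neq 0$). But when $\dim U=1$, each $X_iY_j$ is an outer product, and (a), (b) force $(X_1\;X_2)=0$ or $\binom{Y_1}{Y_2}=0$. So your functor has no object to send $M$ to, whatever real form you choose. The morphism side shows the same thing: with your $X_i$, compatibility with $x$ gives a square with $T$, and the $T^\ddagger$ you expect never appears.

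\textbf{What is true instead.} The normalisation that reproduces the morphisms of $\RepQ(A)$ is $x|_W=(X_1\;\;{-X_2})$ and $y|_V=\binom{Y_1}{Y_2}$, with $X_i\colon U\to V$ and $Y_j\colon V\to U$. Indeed, $T^\ddagger=\Sigma T\Sigma$ for $\Sigma=\left(\begin{smallmatrix}1&0\\0&-1\end{smallmatrix}\right)$, so $Sx=x'T$ is exactly $S(X_1\;X_2)=(X_1'\;X_2')T^\ddagger$.

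With this normalisation, (iv) becomes $X_1Y_2+X_2Y_1=0$, and the nilpotent operator $t|_V$ is $X_1Y_1-X_2Y_2$. If these relations replace (a) and (b), your argument does prove the equivalence. Your presentation (i)--(iv) of $A$ is correct, so that is not the real obstacle.

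Relations (a) and (b) are those of the Gelfand quiver (\ref{E:GelfandQuiver}). They are what you would get if $E_{11},E_{22}$ lay in $A$, which is exactly the point you flagged at the start.

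\textbf{The proposition is false as printed.} The normalisation above embeds $\RepQ(A)$ fully faithfully into $\Rep(A)$, with image among modules where $t\cdot fM=0$. A putative equivalence must match the simple objects. Composing it with this embedding would send $\mathsf{Cok}(L\xrightarrow{t^2}Q)$ to a module with top $T$, socle $T$, two-dimensional endomorphism ring and $t\cdot fM=0$. Theorem \ref{T:AbsCyclic}(I) shows no such module exists.

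The paper's one-line proof has the same defect. In $A$ one has $x_2=-x_1\jmath$ and $y_2=\jmath y_1$. For the displayed operators, however, $x_1\jmath$ has third block row $(0\;\;{-X_1}\;\;0)$ while $-x_2$ has $(0\;\;{-X_2}\;\;0)$. So they define an $A$-module only when $X_1=X_2$ and $Y_1=Y_2$.
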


\begin{proof} We consider the following elements of the $\RR$-algebra $A$:
\begin{equation}\label{E:KeyElements1}
x_1 = 
\left( 
\begin{array}{ccc}
0 & 0 & 0 \\
0 & 0 & 0 \\
1 & 0 & 0 \\
\end{array}
\right), \, 
x_2 = 
\left( 
\begin{array}{ccc}
0 & 0 & 0 \\
0 & 0 & 0 \\
0 & 1 & 0 \\
\end{array}
\right), \, 
y_1 = 
\left( 
\begin{array}{ccc}
0 & 0 & t \\
0 & 0 & 0 \\
0 & 0 & 0 \\
\end{array}
\right), \, 
y_2 = 
\left( 
\begin{array}{ccc}
0 & 0 & 0 \\
0 & 0 & t \\
0 & 0 & 0 \\
\end{array}
\right)
\end{equation}
as well as 
\begin{equation}\label{E:KeyElements}
e = 
\left( 
\begin{array}{ccc}
1 & 0 & 0 \\
0 & 1 & 0 \\
0 & 0 & 0 \\
\end{array}
\right), \, 
\jmath = 
\left( 
\begin{array}{ccc}
0 & -1 & 0 \\
1 & 0 & 0 \\
0 & 0 & 0 \\
\end{array}
\right) \; \mbox{\rm and} \;  
f = 
\left( 
\begin{array}{ccc}
0 & 0 & 0 \\
0 & 0 & 0 \\
0 & 0 & 1 \\
\end{array}
\right).
\end{equation}
Then $x_1 y_1 = x_2 y_2$ and $x_1 y_2 = 0 = x_2 y_1$. Since $\jmath^2 = -e$, the $\RR$-algebra generated by $e$ and $\jmath$ is isomorphic to $\CC$. 

Recall that a complex structure on a real vector space $W$ is an $\RR$-linear map $W \stackrel{J}\lar W$ such that $J^2 = -I_W$. It is well known that one can choose an isomorphism $W \cong U \oplus U$ such that $J$ will be given by $\left(\begin{array}{cc} 0 & -I_U \\ I_U & 0
\end{array}\right)$. In these terms a $\CC$-linear map $(W, J) \stackrel{T}\lar (W', J')$ is given by a matrix of the form $T = \left(\begin{array}{cc} T_1 & -T_2 \\ T_2 & T_1\end{array}\right)$ for some $\RR$-linear maps $U \stackrel{T_k}\lar U'$, where $k = 1, 2$.

Let $\xymatrix
{
V \ar@/^/[rr]^-{\left(\begin{smallmatrix} Y_1 \\Y_2\end{smallmatrix}\right)}  & &  U \oplus U \ar@/^/[ll]^-{\left(\begin{smallmatrix} X_1  & X_2\end{smallmatrix}\right)}
 }$ be an object of the category $\RepQ(A)$. Then the real vector space $U \oplus U \oplus V$ carries the structure of a left $A$-module, with the actions of $x_k, y_k$ ($k = 1, 2$), $e, \jmath$ and $f$ given by the following $\RR$-linear endomorphisms: 
 \begin{equation*}
x_1 = 
\left( 
\begin{array}{ccc}
0 & 0 & 0 \\
0 & 0 & 0 \\
X_1 & 0 & 0 \\
\end{array}
\right), \, 
x_2 = 
\left( 
\begin{array}{ccc}
0 & 0 & 0 \\
0 & 0 & 0 \\
0 & X_2 & 0 \\
\end{array}
\right), \, 
y_1 = 
\left( 
\begin{array}{ccc}
0 & 0 & Y_1 \\
0 & 0 & 0 \\
0 & 0 & 0 \\
\end{array}
\right), \, 
y_2 = 
\left( 
\begin{array}{ccc}
0 & 0 & 0 \\
0 & 0 & Y_2 \\
0 & 0 & 0 \\
\end{array}
\right)
\end{equation*}
as well as 
\begin{equation*}
e = 
\left( 
\begin{array}{ccc}
I & 0 & 0 \\
0 & I & 0 \\
0 & 0 & 0 \\
\end{array}
\right), \, 
\jmath = 
\left( 
\begin{array}{ccc}
0 & -I & 0 \\
I & 0 & 0 \\
0 & 0 & 0 \\
\end{array}
\right) \; \mbox{\rm and} \;  
f = 
\left( 
\begin{array}{ccc}
0 & 0 & 0 \\
0 & 0 & 0 \\
0 & 0 & I \\
\end{array}
\right).
\end{equation*}
A straightforward computation shows that this assignment extends to a functor establishing an equivalence of categories 
$\RepQ(A) \stackrel{\EE}\lar \Rep(A)$. 
\end{proof}

For any linear map $V_1 \stackrel{X}\lar V_2$, we denote by $V_2^\ast \stackrel{X^\ast}\lar V_1^\ast$ the corresponding dual map. Next, note that the opposite algebra of $A$ can be described as follows: 
\begin{equation*}
 A^\circ = 
\left\{\left. 
\left(\begin{array}{ccc} a_{11} & a_{12} & a_{13} \\ a_{21} & a_{22} & a_{23} \\ a_{31} & a_{32} & a_{33}
\end{array}\right) \in  M_3(R) \; \right|\;  
\begin{array}{c}
\left(
\begin{array}{cc}
\bar{a}_{11} & \bar{a}_{12} \\  \bar{a}_{21} & \bar{a}_{22}
\end{array}
\right) \in C
 \\
\bar{a}_{31}= \bar{a}_{32} = 0
\end{array}
\right\}.
\end{equation*}
where $C = \left\{\left.\left(\begin{array}{cc}
\alpha& -\beta \\ \beta & \alpha
\end{array}\right) \right| \alpha, \beta \in \RR \right\}$.
Indeed, the map $A \lar A^\circ$ assigning to $X \in A$ the transposed matrix $X^t \in A^\circ$ gives the corresponding identification of $A^\circ$ and the opposite algebra of $A$.  Next, consider the matrix $Z = \left(\begin{array}{ccc} 1 & 0 & 0 \\ 0 & 1 & 0 \\ 0 & 0& t
\end{array} \right)$. Then the map
$
A \stackrel{\mathsf{Ad}_Z}\lar  A^\circ, \, X \mapsto Z X Z^{-1}
$
is an isomorphism of $\RR$-algebras, hence it induces an equivalence of categories $\Rep(A^\circ) \xrightarrow{\mathsf{Ad}^\sharp_Z} \Rep(A)$. We have the duality functor $\Rep(A) \xrightarrow{\widehat\DD} \Rep(A^\circ), M \mapsto M^\ast$. In this way we obtain a contravariant auto-equivalence $\Rep(A) \xrightarrow{{\DD}}  \Rep(A)$ given by the composition
\begin{equation}\label{E:TwistedDuality}
\Rep(A) \xrightarrow{\widehat{\DD}} \Rep(A^\circ) \xrightarrow{\mathsf{Ad}^\sharp_Z} \Rep(A).
\end{equation}

The following result can be verified by a straightforward computation. 

\begin{lemma}
Under the equivalence of categories $\RepQ(A) \stackrel{\EE}\lar \Rep(A)$ constructed in Proposition \ref{GelfandOrderLAProblem}, the twisted duality functor (\ref{E:TwistedDuality}) is identified with the contravariant auto-equivalence
$\RepQ(A) \stackrel{\DD}\lar \RepQ(A)$ given by
\begin{equation}\label{E:Duality}
\left[\xymatrix
{
V \ar@/^/[rr]^-{\left(\begin{smallmatrix} Y_1 \\ Y_2\end{smallmatrix}\right)} & & U \oplus U \ar@/^/[ll]^-{\left(\begin{smallmatrix} X_1 & X_2\end{smallmatrix}\right)}
}\right]
\stackrel{\DD}\longmapsto
\left[\xymatrix
{
V^\ast \ar@/^/[rr]^-{\left(\begin{smallmatrix} X_1^\ast \\ X_2^\ast\end{smallmatrix}\right)} & & U^\ast \oplus U^\ast \ar@/^/[ll]^-{\left(\begin{smallmatrix} Y_1^\ast & Y_2^\ast\end{smallmatrix}\right)}
}\right].
\end{equation}
On morphisms this functor is defined by
$\DD(S, T) = (S^\ast, {T^\ddagger}^\ast)$.
\end{lemma}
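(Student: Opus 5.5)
The plan is to compute $\mathsf{Ad}^\sharp_Z\circ\widehat{\DD}$ directly on the module $\EE(\mathcal V)$ attached to an object $\mathcal V$ of $\RepQ(A)$. It is enough to track the action of the generators $x_1,x_2,y_1,y_2,e,\jmath,f$ from (\ref{E:KeyElements1}) and (\ref{E:KeyElements}). We then check that the result is literally $\EE(\DD(\mathcal V))$ for the datum in (\ref{E:Duality}), and do the same for morphisms. Throughout we use the dual basis, so that $M^\ast = U^\ast\oplus U^\ast\oplus V^\ast$ for $M=\EE(\mathcal V)=U\oplus U\oplus V$.

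\emph{Step 1: the twisted action on $M^\ast$.} By definition $X^t\in A^\circ$ acts on $M^\ast$ by $\varphi\mapsto \varphi\circ X$. In block form this is the transpose (the dual map) of the matrix by which $X$ acts on $M$. After twisting by $\mathsf{Ad}_Z$, an element $a\in A$ acts on $M^\ast$ as $ZaZ^{-1}\in A^\circ$, i.e.\ as $X^t$ with $X = Z^{-1}a^tZ$ (here $Z$ is diagonal). So $a$ acts on $M^\ast$ by the dual of the action of $a' := Z^{-1}a^tZ$ on $M$. A short computation with $Z=\mathrm{diag}(1,1,t)$ gives
\[
x_k' = y_k,\quad y_k' = x_k\ (k=1,2),\quad e'=e,\quad f'=f,\quad \jmath' = \jmath^t = -\jmath .
\]
For instance, $x_1 = E_{31}\mapsto Z^{-1}E_{13}Z = tE_{13}=y_1$ and $y_1=tE_{13}\mapsto Z^{-1}tE_{31}Z = E_{31}=x_1$.

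\emph{Step 2: reading off the datum.} Consequently $x_1$ acts on $M^\ast$ by the dual of the block matrix of $y_1$. That matrix has $Y_1$ in position $(1,3)$, so its dual has $Y_1^\ast$ in position $(3,1)$. Hence the new $X_1$ is $Y_1^\ast$, and similarly the new $X_2$ is $Y_2^\ast$, the new $Y_k$ is $X_k^\ast$, and $e,f$ act by the same idempotent block matrices. The delicate point is $\jmath$: it acts by the dual of $-\jmath$, which is $\bigl(\begin{smallmatrix}0&I\\-I&0\end{smallmatrix}\bigr)^t=\bigl(\begin{smallmatrix}0&-I\\I&0\end{smallmatrix}\bigr)$. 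This is exactly the standard complex structure on $U^\ast\oplus U^\ast$, so no change of coordinates is needed. The sign introduced by transposing $\jmath$ is what produces $T^\ddagger$ below. We also check that the new datum satisfies the relations of $\RepQ(A)$. Relation (b) follows by dualizing $X_1Y_2=0=X_2Y_1$. For (a), $Y_1^\ast X_1^\ast=(X_1Y_1)^\ast$ and this equals $(X_2Y_2)^\ast = Y_2^\ast X_2^\ast$, and it is nilpotent because $X_1Y_1$ is. (Nilpotency also follows automatically, since the result is a finite dimensional $A$-module.)

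\emph{Step 3: morphisms and naturality.} A morphism $(S,T)$ corresponds to the $A$-linear map $\mathrm{diag}(T,S)\colon M\to M'$. Its image under $\widehat{\DD}$ is the dual map $\mathrm{diag}(T^\ast,S^\ast)$, and $\mathsf{Ad}^\sharp_Z$ does not change underlying linear maps. Writing $T=\bigl(\begin{smallmatrix}T_1&-T_2\\T_2&T_1\end{smallmatrix}\bigr)$, its dual in the dual bases is $\bigl(\begin{smallmatrix}T_1^\ast&T_2^\ast\\-T_2^\ast&T_1^\ast\end{smallmatrix}\bigr)$. This is $(T^\ddagger)^\ast$, which is again of the required complex-linear shape (with $T_2$ replaced by $-T_2^\ast$). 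So $\DD(S,T)=(S^\ast,{T^\ddagger}^\ast)$, and $\mathsf{Ad}^\sharp_Z\circ\widehat{\DD}\circ\EE = \EE\circ\DD$ holds on the nose, with the identity as the natural isomorphism.

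The main obstacle is purely bookkeeping: keeping conventions consistent for how $A^\circ$ acts on $M^\ast$ and in which direction $\mathsf{Ad}_Z$ is pulled back. The sign of $\jmath$ is the one place where an error would show up as a non-standard complex structure and a spurious $T$ in place of $T^\ddagger$.
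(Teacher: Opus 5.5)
Your route is the direct computation the paper has in mind (its proof is just ``straightforward computation''), and Steps 1--2 correctly push the block matrices of Proposition~\ref{GelfandOrderLAProblem} through $\mathsf{Ad}^\sharp_Z\circ\widehat{\DD}$. The gap is in Step 3, which rests on a false identity. The matrix you obtain, $\bigl(\begin{smallmatrix}T_1^\ast&T_2^\ast\\-T_2^\ast&T_1^\ast\end{smallmatrix}\bigr)$, is the dual of $T$ itself. The dual of $T^\ddagger=\bigl(\begin{smallmatrix}T_1&T_2\\-T_2&T_1\end{smallmatrix}\bigr)$ is $\bigl(\begin{smallmatrix}T_1^\ast&-T_2^\ast\\T_2^\ast&T_1^\ast\end{smallmatrix}\bigr)$; for $U=U'=\RR$, $T_1=0$, $T_2=1$ the two are negatives of each other. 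So what you have shown is $\DD(S,T)=(S^\ast,T^\ast)$, and this is not a harmless relabeling. With morphisms of $\RepQ(A)$ as defined, $(S^\ast,T^\ast)$ is in general not a morphism $\DD(\mathcal V')\to\DD(\mathcal V)$. The $Y$-square for the dual objects requires $R\bigl(\begin{smallmatrix}X_1'^\ast\\X_2'^\ast\end{smallmatrix}\bigr)=\bigl(\begin{smallmatrix}X_1^\ast\\X_2^\ast\end{smallmatrix}\bigr)S^\ast$. Dualizing the original $X$-square $S(X_1\ X_2)=(X_1'\ X_2')T^\ddagger$ gives exactly this with $R=(T^\ddagger)^\ast$, not with $R=T^\ast$. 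For example, take $U=\RR$, $V=\RR^2$, $(X_1\ X_2)=I_2$, $Y_1=Y_2=0$: the endomorphisms are $(T^\ddagger,T)$, those of the dual object are $(R,R)$, so one needs $R=(T^\ddagger)^\ast$. Dualizing the original $Y$-square then gives the new $X$-square, because $\bigl((T^\ddagger)^\ast\bigr)^\ddagger=T^\ast$. This dualization of the two defining squares is what proves the morphism part. Your claim that $\mathsf{Ad}^\sharp_Z\circ\widehat{\DD}\circ\EE=\EE\circ\DD$ holds on the nose fails on morphisms.

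The underlying issue is where the $\ddagger$ comes from. It cannot come from $\jmath$, as your closing remark suggests: your own Step 2 shows that $(-\jmath)^\ast$ is the standard complex structure on $U^\ast\oplus U^\ast$, so transposing $\jmath$ introduces no conjugation. The $\ddagger$ sits in the $X$-square of the definition of morphisms, i.e.\ in how $(X_1\ X_2)$ encodes the action of $x_1$ on $eM$. The literal block matrices you transcribed do not see this. Commuting $\mathrm{diag}(T,S)$ with those actions of $x_1,x_2$ yields in particular $S(X_1\ X_2)=(X_1'\ X_2')T$ rather than $T^\ddagger$. Moreover, the relation $x_1\jmath=-x_2$ in $A$ would even force $X_1=X_2$. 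So transposing those matrices can only ever return $T^\ast$.

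A correct module-level argument must start from an encoding in which $A$-linearity of $\mathrm{diag}(T,S)$ is exactly the pair of squares defining $\RepQ(A)$. For instance, let $x_1$ act on $eM$ by $(X_1\ \ {-X_2})$ and $y_1$ by $\bigl(\begin{smallmatrix}Y_1\\Y_2\end{smallmatrix}\bigr)$, and then track the conjugation $\mathrm{diag}(I,-I)$ through $\widehat{\DD}$ and $\mathsf{Ad}^\sharp_Z$. Be aware that in this encoding the conjugation also appears on objects: one gets $(Y_1^\ast\ \ {-Y_2^\ast})$ and $\bigl(\begin{smallmatrix}X_1^\ast\\-X_2^\ast\end{smallmatrix}\bigr)$ together with $(S^\ast,T^\ast)$. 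Formula (\ref{E:Duality}) with $(S^\ast,{T^\ddagger}^\ast)$ is then recovered only after composing with the twist by the automorphism $\mathsf{Ad}_{\mathrm{diag}(1,-1,1)}$ of $A$, equivalently by using $\mathrm{diag}(1,-1,t)$ in place of $Z$.
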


\section{Schurian representations of the real Gelfand order}

Recall that $M \in \mathsf{Ob}\bigl(\Rep(A)\bigr)$ is called \emph{Schurian} if its endomorphism algebra $\End_A(M)$ is a division algebra.

\begin{theorem}\label{T:Schurian} The category $\Rep(A)$ contains precisely six Schurian objects:
\begin{enumerate}
\item[(a)] Two simple modules $S$ and $T$ of vector dimensions, respectively, $(1,0)$ and $(0,1)$.
\item[(b)] Two modules of length two:
\begin{equation}
\begin{array}{ccc}
\xymatrix
{
\RR \ar@/^/[rr]^-{\left(\begin{smallmatrix} 1 \\\hline \\ 0\end{smallmatrix}\right)}  & &  \RR \oplus\RR \ar@/^/[ll]^-{\left(\begin{smallmatrix} 0  \;  |& 0 \end{smallmatrix}\right)}
 } & 
 \mbox{\rm and} & \xymatrix
{
\RR \ar@/^/[rr]^-{\left(\begin{smallmatrix} 0 \\\hline \\ 0\end{smallmatrix}\right)}  & &  \RR \oplus\RR \ar@/^/[ll]^-{\left(\begin{smallmatrix} 1  \;  |& 0 \end{smallmatrix}\right)}
 } 
 \end{array}
\end{equation}
whose endomorphism algebras are isomorphic to $\RR$. 
\item[(c)] Two modules of length three:
\begin{equation}
\begin{array}{ccc}
\xymatrix
{
\RR^2 \ar@/^/[rr]^-{\tiny{\left(
\begin{array}{cc}
1 & 0\\
\hline
0 & 1
\end{array}
\right)
}}  & &  \RR \oplus\RR \ar@/^/[ll]^-{
\tiny{\left(
\begin{array}{c|c}
0 & 0\\
0 & 0
\end{array}
\right)
}}} & 
 \mbox{\rm and} & 
 \xymatrix
{
\RR^2 \ar@/^/[rr]^-{\tiny{\left(
\begin{array}{cc}
0 & 0\\
\hline
0 & 0
\end{array}
\right)
}}  & &  \RR \oplus\RR \ar@/^/[ll]^-{
\tiny{\left(
\begin{array}{c|c}
1 & 0\\
0 & 1
\end{array}
\right)
}}}
\end{array}
 \end{equation}
whose endomorphism algebras are isomorphic to $\CC$. 
\end{enumerate}
\end{theorem}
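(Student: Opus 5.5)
The plan is to work in $\RepQ(A)$ via Proposition~\ref{GelfandOrderLAProblem} and to use first that a Schurian module has no nonzero nilpotent endomorphisms. The element $t\cdot 1 \in A$ (the scalar matrix $t I_3$) is central in $A$. On an object (\ref{E:LinData}) it acts on $V$ by $X_1Y_1 = X_2Y_2$. On $U\oplus U$ it acts by the block matrix with entries $Y_iX_j$, where $i$ is the block row and $j$ the block column; this is what $t\cdot 1 = y_1x_1 + y_2x_2 + x_1y_1 \in A$ gives. Since $M$ is finite dimensional and nilpotent, multiplication by $t$ is a nilpotent $A$-endomorphism of $M$. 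If $M$ is Schurian, $\End_A(M)$ is a division algebra, so this endomorphism vanishes. Hence $X_1Y_1=0$ and $Y_iX_j=0$ for all $i,j$. Together with condition (b) of the definition, every composition of two arrows is zero. Therefore $M$ is a module over the finite dimensional algebra $\bar A = A/tA$, whose radical $J/tA$ squares to zero in the relevant sense, with $\bar A/\mathsf{rad}\cong \CC\times\RR$.

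Next I will apply the standard separation trick for radical-square-zero algebras. Send $M$ to the representation $M/\mathsf{rad}M \to \mathsf{rad}M$ of the separated (bipartite) species. The vertices are $U_{\mathrm{top}}, V_{\mathrm{top}}$ and $U_{\mathrm{bot}}, V_{\mathrm{bot}}$, with division rings $\CC,\RR,\CC,\RR$. The arrows are $V_{\mathrm{top}}\to U_{\mathrm{bot}}$, given by $Y$ (equivalently a $\CC$-linear map $\CC\otimes_\RR V\to U\oplus U$), and $U_{\mathrm{top}}\to V_{\mathrm{bot}}$, given by $X$. Each arrow carries the bimodule ${}_\CC\CC_\RR$, in the appropriate orientation. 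This species is a disjoint union of two species of Dynkin type $B_2=C_2$. Its indecomposables are known and easy to list: with dimension vector written as ($\CC$-dimension, $\RR$-dimension), they are $(1,0)$, $(0,1)$, $(1,1)$ and $(1,2)$ in each component.

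I will then use the standard facts about this functor. It induces a bijection between indecomposable non-simple $\bar A$-modules and indecomposable non-simple-injective representations of the separated species, and it is full on radical-square-zero modules modulo morphisms factoring through semisimples. From these I need to deduce that $M$ is Schurian only if its image is a brick. Conversely, I will simply check directly that the six listed modules have endomorphism algebras $\CC$, $\RR$, $\RR$ (length two) and $\CC$ (length three). Here dimension vectors $(1,1)$ and $(1,2)$ correspond to (b) and (c), and the two orientations give the two members of each pair. I also have to exclude the remaining indecomposables. Here $t=0$ is forced, so nothing else survives, but I should double-check that each $(1,2)$ object and each $(1,1)$ object is realised exactly once up to isomorphism. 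The computations behind this use the description of morphisms $(S,T)$ with $T$ complex-linear.

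The main obstacle is the transfer of the Schurian property through the separation functor. The functor is not fully faithful, so a radical endomorphism of $M$ might become zero in the separated species. I will handle this directly instead of quoting a general theorem. Suppose $M$ is indecomposable, $t=0$, and $M$ is not one of the six modules. Then either $\mathsf{rad}M$ and $M/\mathsf{rad}M$ share a simple summand, or the separated representation has a nontrivial nilpotent endomorphism. In the first case the composite $M\to M/\mathsf{rad}M\to \mathsf{top}\to \mathsf{soc}\to M$ is a nonzero nilpotent endomorphism, since $\mathsf{rad}M\subseteq \mathsf{soc}M$ when the radical squares to zero. In the second case the endomorphism lifts. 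Either way $M$ is not Schurian. The twisted duality $\DD$ of the preceding lemma halves the casework, since it swaps the two members of each pair.
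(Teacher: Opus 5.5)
Your first step (a Schurian $M$ is killed by the central nilpotent endomorphism $t$, hence is an $A/tA$-module) is the same as the paper's. The next step is false: $A/tA$ is \emph{not} radical-square-zero, and $tM=0$ does not force all composites of two arrows to vanish. Indeed $\mathsf{rad}(A)^2=tH$, while $tA\subsetneq tH$. With matrix units $e_{ij}$ one has $y_1x_1=te_{11}$ and $y_1x_2=te_{12}$. Neither lies in $tA$, because $e_{11},e_{12}\notin A$. Hence $(\mathsf{rad}(A)/tA)^2\cong tH/tA\cong H/A$ is $2$-dimensional.

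Your formula for the action of $t$ on $U\oplus U$ is where this goes wrong. The identity $t\cdot 1=y_1x_1+y_2x_2+x_1y_1$ gives only the diagonal blocks $Y_1X_1$ and $Y_2X_2$. The off-diagonal composites $Y_1X_2$ and $Y_2X_1$ come from $te_{12},te_{21}\notin tA$ and are not forced to vanish. Intrinsically, $tM=0$ kills only the $\CC$-linear part of the composite $eM\to fM\to eM$, not its antilinear part.

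Concretely, in the paper's presentation $\Lambda\cong A/tA$, the module $P=\Lambda e$ is annihilated by $t$. It has length $4$ and Loewy length $3$: top $S$, middle layer $T^{\oplus 2}$, socle $S$. Your separation functor does not handle it, and your inclusion $\mathsf{rad}M\subseteq\mathsf{soc}M$ fails for it. So, as written, your classification does not cover all modules with $tM=0$.

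The missing ingredient is the paper's treatment of this module. $P$ is projective-injective with simple socle $I=(\mathsf{rad}\Lambda)^2$, and $I$ is a two-sided ideal. Suppose an indecomposable $\Lambda$-module $M$ has $IM\neq 0$. Since $I=Ie$, there is a map $P\to M$ not killing $I$. This map is injective because $I$ is the simple socle, so it splits because $P$ is injective, and therefore $M\cong P$. Moreover $\End_\Lambda(P)$, which is $e\Lambda e$ up to taking the opposite ring, has nonzero radical (the $\delta$-entry), so $P$ is not Schurian.

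Only after this, passing to $\Gamma=\Lambda/I$ via the separation lemma, does your radical-square-zero analysis apply. From there your route coincides with the paper's: the separated algebra is a product of two hereditary species of type $B_2$, which gives exactly six indecomposable $\Gamma$-modules. The ``transfer of the Schurian property'' you flag as the main obstacle is then unnecessary. Once you know there are exactly six indecomposables besides $P$, and you have checked that the six listed modules are Schurian and pairwise non-isomorphic, counting finishes the proof.
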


\begin{proof} It is clear that the six listed objects of $\Rep(A)$ are pairwise non-isomorphic and Schurian. It remains to show that this list is exhaustive.

Let $M$ be a finite dimensional $A$-module. Since $t \in A$ is central, the map $M \xrightarrow{\lambda_t} M, x \mapsto tx$ is $A$-linear and nilpotent. If $M$ is Schurian, then $\lambda_t = 0$, and hence $M$ is a module over the finite dimensional algebra $A/tA$. Consider the following $\RR$-algebra
\begin{equation}
\Lambda = 
\left\{\left. 
\left(\begin{array}{ccc} \bar{\alpha} & 0  & 0  \\ \beta & \rho & 0  \\ \delta & \gamma & \alpha
\end{array}\right) \in  M_3(\CC) \; \right|\;  
\alpha, \beta, \gamma, \delta \in \CC, \rho \in \RR
\right\}.
\end{equation}
One can check that the map
$$
\Lambda \lar A/tA, \left(\begin{array}{ccc} \bar{\alpha} & 0  & 0  \\ \beta & \rho & 0  \\ \delta & \gamma & \alpha
\end{array}\right) \mapsto
\left[\left(
\begin{array}{c| c}
\left(
\begin{array}{cc}
a_1 & -a_2 \\
a_2 & a_1
\end{array}
\right) + 
t \left(
\begin{array}{cc}
d_1 &  0 \\
d_2 & 0
\end{array} 
\right)
& \begin{array}{c}
t c_1 \\
t c_2
\end{array} 
\\
\hline
b_1 \quad \quad b_2 \quad \quad \quad  \quad \quad \quad  & \rho
\end{array}
\right)\right],
$$
where $\alpha = a_1 + i a_2$, $\beta = b_1 + i b_2$, $\gamma = c_1 + i c_2$ and $\delta =  d_1 + i d_2$, is an isomorphism of $\RR$-algebras. 

Next, consider $e = 
\left( 
\begin{array}{ccc}
1 & 0 & 0 \\
0 & 0 & 0 \\
0 & 0 & 1 \\
\end{array}
\right) \in \Lambda$ as well as the corresponding indecomposable projective $\Lambda$-module $P = \Lambda e$. Note that 
$\End_{\Lambda}(P) = e \Lambda e$ has non-zero radical; hence $P$ is not Schurian. 
We claim that $P$ is an injective $A$-module. Indeed, $P$ is of length 4 and
$$
I = \mathsf{soc}(P) = \left\langle 
\left( 
\begin{array}{ccc}
0 & 0 & 0 \\
0 & 0 & 0 \\
1 & 0 & 0 \\
\end{array}
\right)
\right\rangle_{\CC}
$$
is a simple $\Lambda$-module. Consequently, the injective envelope $E_P$ of $P$ is indecomposable. However, the indecomposable injective $\Lambda$-modules have lengths 2 and 4. Therefore, the monomorphism $P \lar E_P$ must be an isomorphism, hence $P$ is injective, as asserted. 

Note that $I$ is a two-sided ideal in $\Lambda$. Let $\Gamma = \Lambda/I$. By the Lemma on Separation, see \cite[Section 9.2]{DrozdKirichenko}, we have: 
$$
\mathsf{Ind}(\Lambda\mbox{\rm --}\mathsf{mod}) = \{P\} \sqcup \mathsf{Ind}(\Gamma\mbox{\rm --}\mathsf{mod}).
$$
Hence, all Schurian objects of $\Rep(A)$ arise from Schurian objects of $\Gamma\mbox{--}\mod$. Let $J = \mathsf{rad}(\Gamma)$ be the Jacobson radical of $\Gamma$. It is clear that $J^2 = 0$. We have:
$$
\Omega := 
\left[
\begin{array}{cc}
\Gamma/J & 0 \\
J & \Gamma/J
\end{array}
\right] \cong 
\left[
\begin{array}{cc}
\CC & 0 \\
_\RR\CC_{\bar{\CC}} & \RR
\end{array}
\right] \times \left[
\begin{array}{cc}
\CC & 0 \\
_\RR\CC_{\CC} & \RR
\end{array}
\right] =: \Omega_1 \times \Omega_2.
$$
Note that $\Omega$ is a hereditary $\RR$-algebra. By a result of Auslander and Reiten, see \cite[Theorem V.2.1]{AuslanderReiten}, we have an equivalence of stable categories:
$
\underline{\Gamma\mbox{\rm --}\mathsf{mod}} \simeq \underline{\Omega\mbox{\rm --}\mathsf{mod}}.
$ For both $k = 1, 2$, the $\RR$-algebra $\Omega_k$ is a hereditary $\RR$-algebra of type $B_2$. It follows that the  category $\Omega_k\mbox{\rm --}\mathsf{mod}$ has precisely four indecomposable objects, two of which are projective, see  \cite{Gabriel}. Therefore, $\Gamma\mbox{\rm --}\mathsf{mod}$ has precisely six indecomposable objects. It follows that $\Rep(A)$ has at most six Schurian objects, so all such objects are precisely those listed in the statement of the theorem. 
\end{proof}

\section{Absolutely cyclic representations of the real Gelfand order}

\begin{definition} An object $M \in \mathsf{Ob}\bigl(\Rep(A)\bigr)$ is called \emph{absolutely cyclic}
if one of the following equivalent conditions holds:
\begin{enumerate}
\item[(a)] There exists an epimorphism $F \rightarrowdbl M$, where $F$ is an indecomposable projective $A$-module.
\item[(b)] The top of $M$ is simple.
\item[(c)] $M$ has a unique maximal proper submodule. 
\end{enumerate}
Clearly, such a module is automatically indecomposable. 
\end{definition}
\begin{example}
The only Schurian object of $\Rep(A)$ which is \emph{not} absolutely cyclic is 
$\xymatrix{
\RR^2 \ar@/^/[rr]^-{\tiny{\left(
\begin{array}{cc}
1 & 0\\
\hline
0 & 1
\end{array}
\right)
}}  & &  \RR \oplus\RR \ar@/^/[ll]^-{
\tiny{\left(
\begin{array}{c|c}
0 & 0\\
0 & 0
\end{array}
\right)
}}}$.
 Its top is isomorphic to $T^{\oplus 2}$. 
\end{example}
\begin{remark} Absolutely cyclic representations of the complex Gelfand order (\ref{E:GelfandOrder}) were classified in \cite[Theorem 2.5]{ABR}. They correspond precisely to those complex Harish-Chandra modules for $\SL_{2}(\RR)$ which are associated with polyharmonic vector-valued Maa\ss{} forms for congruence subgroups of $\SL_2(\ZZ)$, with exponential growth allowed at the cusps.
\end{remark}

Recall that a finitely generated $A$-module $F$ is called a \emph{lattice} if it is torsion-free as an $R$-module. In what follows, we shall need the following result, whose proof follows from \cite[Theorem B]{RingelRoggenkamp}.

\begin{lemma}\label{L:RR}
There are precisely three non-isomorphic indecomposable $A$-lattices:
\begin{equation*}
P = A e = 
\left\{\left.\left(\begin{array}{cc} \alpha & - \beta \\ \beta & \alpha \\ 0 & 0\end{array}\right) \right| \alpha, \beta \in \RR\right\} + \left[\begin{array}{cc} \idm & \idm \\ \idm & \idm \\ R & R\end{array} \right], \; Q = A f = \left[\begin{array}{c} \idm \\ \idm \\ R \end{array} \right] \, \mbox{\rm{and}} \; L = \left[\begin{array}{c} R \\ R \\ R \end{array} \right].
\end{equation*}
\end{lemma}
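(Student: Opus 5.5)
The plan is to reduce the classification of $A$-lattices to lattices over the hereditary overorder $H$, using the standard theory of nodal orders (as in \cite{RingelRoggenkamp}). There are two inputs. First, $J=\mathsf{rad}(A)=\mathsf{rad}(H)$. Second, $H$ is hereditary, so every $H$-lattice is a direct sum of indecomposable projective $H$-modules. Since $H/J\cong M_2(\RR)\times\RR$, there are exactly two such modules: the column lattice $L=[R\ R\ R]^t$ and $Q=[\idm\ \idm\ R]^t$.

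Given an $A$-lattice $F$, I would form $\widetilde F:=HF\subseteq K\otimes_R F$, where $K=\RR(\!(t)\!)$. Because $JF\subseteq F$ and $J=JH$, we get $J\widetilde F=JHF\subseteq JF\subseteq F$. So $J\widetilde F\subseteq F\subseteq \widetilde F$, and $F$ is determined by the $A/J$-submodule $\overline F=F/J\widetilde F$ of the semisimple $H/J$-module $\widetilde F/J\widetilde F$. Here the condition $H\overline F=\widetilde F/J\widetilde F$ holds.

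Write $\widetilde F\cong L^{a}\oplus Q^{b}$ and view everything over the $\RR$-factor and the $M_2(\RR)$-factor separately.

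\emph{The $\RR$-factor.} The $f$-component of $A/J$ equals that of $H/J$, so $\overline F$ contains the whole $f$-part.

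\emph{The $M_2(\RR)$-factor.} Here we have a $\CC$-subspace $W$ of the $M_2(\RR)$-module $(\RR^2)^{m}$, where $m$ counts the relevant summands of $\widetilde F/J\widetilde F$ (which depend on $a$ and $b$). The condition $M_2(\RR)W=(\RR^2)^m$ is required. Using Morita equivalence, $(\RR^2)^m$ corresponds to $\RR^m$, and a $\CC$-subspace of $\RR^2\otimes\RR^m\cong\CC^m$ becomes a complex subspace $W\subseteq\CC^m$ whose real span is all of $\RR^m$. By $\GL_m(\RR)$, which is the automorphism group of the $H$-lattice restricted to this block, together with the gluing with the $\RR$-factor, one normalizes $W$. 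The point is that $W\cap\overline W$ and the pair $(W,\RR^m)$ split into blocks of size at most one over $\RR$-direction, i.e. $W=\CC\cdot v$ summands or $W$ containing a full real line.

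I expect the main obstacle to be exactly this normal-form step: classifying complex subspaces of $\CC^m$ up to the real group $\GL_m(\RR)$, combined with the way the $L$- and $Q$-summands interact through $J$. This interaction matters because $Q/JQ$ and $L/JL$ live in different simple components, and the inclusions $JL\subset Q\subset L$ must be used. I would first try to show that any indecomposable configuration has $m\le1$ in the appropriate sense, with $W=\CC v$ for $v\in\RR^2$. This yields $P=Ae$, which is obtained from $\widetilde F=L$ with $\overline F$ equal to the $\CC$-line $\varrho(\CC)$-orbit, namely $\{(\alpha,\beta)\}$ in the first column restricted appropriately. The remaining cases, where $\overline F$ is everything, give $F=\widetilde F\in\{L,Q\}$. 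Decomposability of larger configurations would follow from the normalization.

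Finally, I would check that $P$, $Q$ and $L$ are indecomposable and pairwise non-isomorphic. Indecomposability of $P$ holds because its top is the simple $\CC$-module, and $Q=Af$ is projective with top $T$. For non-isomorphism, $L$ is not projective: its top is $S$, but it differs from $P$ since $P\ne HP=L$. Alternatively, all of this can be read off directly from \cite[Theorem B]{RingelRoggenkamp}, which treats exactly the lattices over orders of the form $J\subset A\subset H$ with $H$ hereditary.
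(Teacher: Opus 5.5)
\textbf{Gap.} Your reduction $J\widetilde F\subseteq F\subseteq \widetilde F=HF\cong L^{a}\oplus Q^{b}$ is sound. It is the kind of reduction underlying \cite[Theorem B]{RingelRoggenkamp}, and citing that theorem, as you do at the end, is the paper's entire proof. The $Q^{b}$ part is harmless and needs no ``interaction'' analysis: $f\overline F$ is the whole $f$-part of the top, so the preimage of $\overline F$ is $F_1\oplus Q^{b}$ with $F_1\subseteq L^{a}$, and $m=a$. But the step you flag as the main obstacle is left undone, and what you predict for it is false. For $\widetilde F=L$ the top $L/JL=L/Q\cong\RR^2$ is a \emph{one}-dimensional $\CC$-space. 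The only generating $\CC$-subspace is therefore the whole space, which gives $F=L$, so $P$ cannot arise there. In fact $HP=He\cong L\oplus L$ ($P$ has rank two). Its image $\overline P=\varrho(\CC)\subset M_2(\RR)=L^{2}/Q^{2}$ is the complex line $W=\CC\,(1,i)\subset\CC^{2}$, with $W\cap\overline W=0$. So indecomposable configurations with $m=2$ exist, and establishing ``$m\le 1$, $W=\CC v$'' would produce only $L$ and $Q$. Your non-isomorphism argument ``$P\neq HP=L$'' rests on the same misidentification; compare ranks instead.

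\textbf{The missing normal form.} Let $W\subseteq\CC^{m}$ be a $\CC$-subspace with $W+\overline W=\CC^{m}$. Here $\GL_m(\RR)$ acts by real matrices: right multiplication on $\mathsf{Mat}_{2\times m}(\RR)=L^{m}/Q^{m}$, which lifts to $\Aut_H(L^{m})$.
\begin{enumerate}
\item[(1)] The space $W_0=W\cap\overline W$ is conjugation-stable, so $W_0=\CC\otimes U_0$ for a real subspace $U_0\subseteq\RR^{m}$.
\item[(2)] Choose a real complement $U_1$ of $U_0$. By the modular law $W=W_0\oplus W_1$ with $W_1=W\cap(\CC\otimes U_1)$, and the generation condition forces $W_1\oplus\overline{W_1}=\CC\otimes U_1$.
\item[(3)] The operator equal to $i$ on $W_1$ and $-i$ on $\overline{W_1}$ commutes with conjugation. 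It is therefore the complexification of a complex structure on the real space $U_1$.
\item[(4)] All complex structures on $U_1$ are $\GL(U_1)$-conjugate. Hence after a real change of basis, $(W_1,U_1)$ is a direct sum of copies of $(\CC(1,i),\RR^{2})$.
\end{enumerate}
Consequently $F\cong L^{\dim U_0}\oplus P^{\dim U_1/2}\oplus Q^{b}$. Together with your indecomposability checks for $P$, $Q$, $L$, this proves the lemma.
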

In particular, $P$ and $Q$ are just indecomposable projective $A$-modules with $P/JP\simeq S$ and $Q/JQ\simeq T$.

\smallskip
\noindent
Let $K = \RR\llbrace t\rrbrace$. Then $\widetilde{A}:= K \otimes_R A \cong M_3(K)$ and $E = \left[\begin{array}{c} K \\ K \\ K\end{array} \right]$ is the unique indecomposable $\widetilde{A}$-module. Hence for any $A$-lattice $F$ there exists a unique $n \in \NN$ (called the \emph{rank} of $F$) such that the rational hull $K \otimes_R F$ of $F$ is isomorphic to $$E^{\oplus n} \cong \mathsf{Mat}_{3 \times n}(K) = 
\left[\begin{array}{ccc} K & \dots &  K \\  K & \dots &  K \\ K & \dots &  K \\  \end{array} \right]
$$ 
with the natural action of $\widetilde{A}$ given by left matrix multiplication.
Of course, the ranks of $Q$ and $L$ are one, whereas the rank of $P$ is two. Note that we have an isomorphism of $K$-vector spaces
$$
\mathsf{Mat}_{n \times m}(K) \stackrel{\rho}\lar \Hom_{\widetilde{A}}\bigl(E^{\oplus n}, E^{\oplus m}\bigr),
$$
where for any $X \in \mathsf{Mat}_{n \times m}(K)$ the corresponding $\widetilde{A}$-linear map
$E^{\oplus n} \xrightarrow{\rho_X} E^{\oplus m}$ is given by right multiplication with $X$. 
It follows that for any $A$-lattices $F'$ and $F''$ we have an embedding of $R$-modules
\begin{equation}\label{E:DescrMorphisms}
\Hom_A(F', F'') \lar \Hom_{\widetilde{A}}(K \otimes_R F', K \otimes_R F') \cong \mathsf{Mat}_{n \times m}(K),
\end{equation}
where $n = \mathsf{rk}(F')$ and $m = \mathsf{rk}(F'')$. Next, an $A$-linear map $F' \stackrel{\phi}\lar F''$ is called a \emph{rational isomorphism} if the induced map of rational hulls $K \otimes_R F' \xrightarrow{\mathit{1} \otimes \phi} K \otimes_R F''$ is an isomorphism. Of course, any such $\phi$ is injective and has finite dimensional cokernel. We denote by $\Hom_A^\circ(F', F'')$ the subset of $\Hom_A(F', F'')$ consisting of rational isomorphisms. 

\smallskip
\noindent
The proof of the following result is straightforward. 

\begin{lemma} In terms of the embedding (\ref{E:DescrMorphisms}), we have the following isomorphisms of $R$-algebras:
\begin{enumerate}
\item[(a)] $\End_A(L) \cong R \cong \End_A(Q)$.
\item[(b)] $\End_A(L \oplus Q) \cong \left[
\begin{array}{cc}
R & \idm \\
R & R
\end{array}
\right]$.
\item[(c)] $\End_A(P) = C  \dotplus \left[\begin{array}{cc} \idm & \idm \\ \idm & \idm \end{array} \right]$.
\end{enumerate}
\end{lemma}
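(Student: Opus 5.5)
Since all three lattices $L$, $Q$, $P$ are $R$-lattices in $E$ or $E^{\oplus 2}$, the plan is to compute endomorphisms directly inside the rational endomorphism rings via the embedding (\ref{E:DescrMorphisms}). An endomorphism of $K\otimes_R F$ is right multiplication by a matrix $X\in\mathsf{Mat}_{n\times n}(K)$. Such an $X$ lies in $\End_A(F)$ exactly when $F\cdot X\subseteq F$, where $F$ is viewed as a set of $3\times n$ matrices over $R$ or $K$. Each part therefore reduces to determining which $X$ preserve a given $R$-submodule of column vectors or $3\times 2$ matrices. $A$-linearity is automatic, since left and right multiplication commute.

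For (a), take $F=L=[R\;R\;R]^t$ and $X=x\in K$. The condition $L x\subseteq L$ says $x\in R$, so $\End_A(L)=R$. For $Q=[\idm\;\idm\;R]^t$, the condition $Qx\subseteq Q$ forces $x\in R$ from the third coordinate, because $1$ occurs there. Conversely, every $x\in R$ preserves $Q$. Hence $\End_A(Q)\cong R$ as well.

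For (b), I write $L\oplus Q$ as the set of $3\times 2$ matrices whose first column lies in $L$ and whose second column lies in $Q$. A map $X=(x_{ij})\in\mathsf{Mat}_{2\times 2}(K)$ sends a pair of columns $(l,q)$ to $(l x_{11}+q x_{21},\ l x_{12}+q x_{22})$. Testing with $l$ and $q$ running over generators gives the following conditions:
\begin{itemize}
\item $x_{11}\in R$, from $l=(0,0,1)^t$ and $l=(1,0,0)^t$, with the image required to lie in $L$;
\item $x_{12}$ must satisfy $L x_{12}\subseteq Q$, so $x_{12}\in\idm$, by testing $l=(1,0,0)^t$;
\item $x_{21}$ must satisfy $Q x_{21}\subseteq L$; testing $q=(0,0,1)^t$ gives $x_{21}\in R$;
\item $x_{22}\in R$, by (a).
\end{itemize}
Each of these conditions is also sufficient. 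This gives exactly $\left[\begin{smallmatrix}R&\idm\\ R&R\end{smallmatrix}\right]$, and composition of endomorphisms matches matrix multiplication because the action is by right multiplication.

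For (c), $P$ is the set of $3\times 2$ matrices whose upper $2\times 2$ block lies in $C+\mathsf{Mat}_2(\idm)$ and whose bottom row lies in $R^2$. Since $e\in P$, any endomorphism $X$ must satisfy $eX\in P$. The upper block of $eX$ is $X$ itself, so $X\in C+\mathsf{Mat}_2(\idm)$. For the converse, the bottom row of a product, $(\text{row in } R^2)\cdot X$, lies in $R^2$. The top block $(C+\mathsf{Mat}_2(\idm))(C+\mathsf{Mat}_2(\idm))$ lies in $C+\mathsf{Mat}_2(\idm)$, because $C$ is closed under multiplication. Thus $\End_A(P)=C\dotplus\mathsf{Mat}_2(\idm)$.

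The only step needing care is (c). There one must check that reducing to the single element $e$ loses nothing and that $C$ is multiplicatively closed modulo $\idm$. Both are immediate, since $C\cong\CC$ is a subalgebra.
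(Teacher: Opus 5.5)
Your proposal is correct and is precisely the direct computation inside $\mathsf{Mat}_{n\times n}(K)$ via the embedding (\ref{E:DescrMorphisms}) that the paper leaves as ``straightforward''; this includes testing on the generator $e$ of $P=Ae$ in (c) and then checking sufficiency. One small caveat concerns composition: since $\rho_X\circ\rho_Y=\rho_{YX}$, right multiplication identifies $\End_A(F)$ with the \emph{opposite} of your matrix ring unless you write maps on the right. This is harmless here because all three rings are isomorphic to their opposites: trivially in (a), via $X\mapsto X^t$ in (c), and via $X\mapsto \left(\begin{smallmatrix}0&1\\1&0\end{smallmatrix}\right)X^t\left(\begin{smallmatrix}0&1\\1&0\end{smallmatrix}\right)$ in (b).
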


\smallskip
\noindent
The following theorem is one of the main results of this paper. 
\begin{theorem}\label{T:AbsCyclic} 
Let $M$ be an absolutely cyclic object of $\Rep(A)$.
\begin{enumerate}
\item[(I)] If the top of $M$ is isomorphic to $T$, then $M$ is isomorphic to the cokernel of one of the following maps, for a uniquely determined $k \in \NN$:
\begin{enumerate}
\item[(a)] $Q \xrightarrow{t^k} Q$, and its dimension vector is $(k, k)$.
\item[(b)] $L \xrightarrow{t^k} Q$, and its dimension vector is $(k-1, k)$.
\end{enumerate}
\item[(II)] If the top of $M$ is isomorphic to $S$, then $M$ is isomorphic to the cokernel of one of the following maps: 
\begin{enumerate}
\item[(a)] $Q^{\oplus 2}  \xrightarrow{t^k \left(\begin{smallmatrix} 1 & 0 \\ 0 & t^l \end{smallmatrix}\right)} P$ for uniquely determined $k, l \in \NN_0$. Its dimension vector is $(2k+l+1,  2k +l)$.
\item[(b)] $L^{\oplus 2}  \xrightarrow{t^k \left(\begin{smallmatrix} 1 & 0 \\ 0 & t^l \end{smallmatrix}\right)} P$ for uniquely determined $k \in \NN_0$ and $l \in \NN$, and its dimension vector is $(2k+l-1, 2k +l)$.
\item[(c)] $L \oplus Q  \xrightarrow{t^k \left(\begin{smallmatrix} t^l & 0 \\ 0 & 1\end{smallmatrix}\right)} P$ for some $k \in \NN$ and $l \in \NN_0$, or $L \oplus Q  \xrightarrow{t^k \left(\begin{smallmatrix} 1 & 0 \\ 0 &t^l \end{smallmatrix}\right)} P$ for some  $k \in \NN_0$ and $l \in \NN$. In both cases, such $k$ and $l$ are uniquely determined, and the dimension vector is $(2k+l, 2k+l)$. 
\item[(d)] $P  \xrightarrow{t^k \left(\begin{smallmatrix} 1 & 0 \\ 0 & \lambda t^l\end{smallmatrix}\right)} P$ for some $k \in \NN$ and $l \in \NN_0$ and $\lambda \in \RR^\ast$. In both cases, $k$ and $l$ are uniquely determined, and the dimension vector is $(2k+l, 2k+l)$. 
\begin{enumerate}
\item[(i)] If $l = 0$, then the modules corresponding to $\lambda$ and $\lambda'$ are isomorphic if and only if $\lambda' = \lambda^{\pm 1}$. Hence, we take $\lambda \in [-1, 1] \setminus \{0\}$ to ensure uniqueness.
\item[(ii)] If $l \ge 1$, then the modules corresponding to different $\lambda$ are non-isomorphic. 
\end{enumerate}
\end{enumerate}
\end{enumerate}
\end{theorem}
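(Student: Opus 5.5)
Since $M$ is absolutely cyclic, there is an epimorphism $F \rightarrowdbl M$ with $F \in \{P, Q\}$ indecomposable projective, $F = Q$ if the top is $T$ and $F = P$ if the top is $S$. The kernel $N$ is a submodule of finite codimension in a lattice, hence itself an $A$-lattice of the same rank as $F$. By Lemma \ref{L:RR}, $N$ is a direct sum of copies of $P$, $Q$, $L$ of total rank $\mathsf{rk}(F)$. Therefore $M \cong \mathsf{coker}(\phi)$ for a rational isomorphism $\phi \in \Hom_A^\circ(N, F)$. Two such cokernels are isomorphic precisely when the corresponding embeddings $\phi, \phi'$ lie in the same orbit under the action of $\Aut_A(N) \times \Aut_A(F)$. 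This holds because an isomorphism of cokernels lifts to $F \to F$ (projectivity), the lift is an automorphism by top considerations, and it then restricts to an isomorphism of kernels. So the classification reduces to a matrix problem: describe the orbits of $\Aut(N)\times\Aut(F)$ on $\Hom^\circ_A(N,F) \subset \mathsf{Mat}_{n\times m}(K)$ via (\ref{E:DescrMorphisms}). One must also require that $\phi(N) \subseteq JF$, so that the cokernel keeps a simple top. This condition holds automatically unless $\phi$ is an isomorphism.

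\emph{Case (I).} Here $F = Q$ has rank one, so $N \in \{Q, L\}$. By the Lemma computing endomorphism rings, $\Hom_A(Q,Q) = R$ and $\Hom_A(L,Q) \cong \idm$. Concretely, $\phi$ is multiplication by an element $u t^k$ with $u \in R^\ast$, and $k\ge 1$ so that $\phi$ is not surjective. Units of $R$ absorb $u$, which gives (a) and (b). The uniqueness of $k$ follows from the dimension vector, which I would compute by counting the $e$- and $f$-components of $F/\phi(N)$: namely $(k,k)$ and $(k-1,k)$.

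\emph{Case (II).} Here $F = P$ has rank two, so $N \in \{Q^2, L^2, L\oplus Q, P\}$. I would first write $\Hom_A(X, P)$ for $X \in \{Q, L, P\}$ inside $\mathsf{Mat}(K)$, with $\End(P) = C \dotplus \mathsf{Mat}_2(\idm)$ and $\Hom(Q,P)$, $\Hom(L,P)$ being $2\times1$ columns with entries in $R$ or $\idm$. The acting groups are:
\begin{itemize}
\item on the $P$ side, $\Aut(P) = C^\ast + \mathsf{Mat}_2(\idm)$;
\item on the $N$ side, $\GL_2(R)$ for $N = Q^2$ or $L^2$;
\item the triangular group of $\left[\begin{smallmatrix} R & \idm \\ R & R\end{smallmatrix}\right]$ for $N = L\oplus Q$;
\item $\Aut(P)$ again for $N = P$.
\end{itemize}
The strategy is a Smith normal form argument over $R$, modified to respect the residue constraint. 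First factor out the common power $t^k$ (the minimal valuation of the entries). Then reduce the remaining $2\times2$ matrix to $\mathrm{diag}(1, \lambda t^l)$.

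For $N = Q^2$ or $L^2$, the full group $\GL_2(R)$ acts on one side. On the other side, modulo $t$, the group $\Aut(P)$ only acts through $C^\ast \cong \CC^\ast$, but this suffices to move any nonzero residue vector to $(1,0)^t$, and the remaining scalar can be absorbed by $\GL_2(R)$. So one reaches $\mathrm{diag}(1,t^l)$ with no parameter. For $L\oplus Q$, the triangularity of $\End(L\oplus Q)$ distinguishes which column carries the higher valuation, producing the two subcases listed in (c).

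I expect the main obstacle to be the case $N = P$, where both sides act only through $C^\ast$ modulo $\idm$. A residual parameter $\lambda \in \RR^\ast$ survives, because the modular residue $\mathrm{diag}(1,\lambda)$ cannot be moved by multiplication with complex scalars on both sides except by conjugation-type symmetries. For $l=0$, I would show that the invariant is the unordered pair of "eigenvalues" of the induced real-linear map of $\RR^2$ modulo $\CC^\ast$-scaling on both sides, i.e.\ $\{1,\lambda\}$ up to common scaling. This yields exactly $\lambda \sim \lambda^{-1}$, via swapping the roles using $\jmath$ and rescaling. For $l\ge1$, the first residue $\mathrm{diag}(1,0)$ fixes the normalization, and I would check that the stabilizer cannot rescale $\lambda$. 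I would do this by tracking the leading coefficient of the $(2,2)$-entry after clearing off-diagonal terms, which must be compatible with a $C$-residue of the form $\mathrm{diag}(a,a)$, with both sides' scalars equal.

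Finally, I would verify the listed dimension vectors by computing $\dim_\RR$ of $eM$ and $fM$ from the lattices column by column. Distinct families are then separated by their dimension vectors together with the isomorphism type of the kernel $N$. The latter is an invariant, since $N$ is the syzygy of $M$ in the projective cover.
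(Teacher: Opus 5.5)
Your reduction is the same as the paper's: projective cover, kernel an $A$-lattice classified by Lemma~\ref{L:RR}, isomorphism classes identified with orbits of $\Aut_A(N)\times\Aut_A(F)$ on $\Hom_A^\circ(N,F)$, then normal forms case by case. Cases (I) and (II)(a)--(c) go through as you sketch them.

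The gap is in (II)(d) with invertible residue ($l=0$). This is the one place where Smith-type reduction is not enough. Write $\phi=t^k(X_0+tY)$ with $X_0\in\GL_2(\RR)$. The tail is absorbed by $I+tX_0^{-1}Y\in\Aut_A(P)$, so you must classify $X_0$ under $X_0\mapsto\eta_0X_0\xi_0$ with $\eta_0,\xi_0\in C^\ast$. This is not conjugation, so the eigenvalues of $X_0$ are not invariant. For example, $\left(\begin{smallmatrix}0&-1\\ 1&0\end{smallmatrix}\right)\mathrm{diag}(1,\lambda)=\left(\begin{smallmatrix}0&-\lambda\\ 1&0\end{smallmatrix}\right)$ has eigenvalues $\pm\sqrt{-\lambda}$. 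So the invariant you name cannot give the ``only if'' part of (i). You also give no argument that every invertible residue can be brought to the form $\mathrm{diag}(1,\lambda)$ at all. Modulo $\idm$ only $\CC^\ast\times C^\ast$ acts, so elementary row and column operations are not available there.

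The missing ingredient is that elements of $C^\ast$ are similarities: $\eta^t\eta=\det(\eta)I$ with $\det\eta>0$. Hence $X_0^tX_0$ is replaced by a positive multiple of a conjugate, and $\det X_0$ by a positive multiple. It follows that $\mathrm{tr}(X_0^tX_0)/\det X_0$ is an invariant. On $\mathrm{diag}(1,\lambda)$ it equals $\lambda+\lambda^{-1}$, which forces $\lambda'\in\{\lambda,\lambda^{-1}\}$. Existence of the normal form follows from a singular value decomposition with both orthogonal factors in $\SO_2(\RR)\subset C^\ast$: move the sign of $\det X_0$ into the diagonal factor, then rescale. This is exactly the paper's Claim. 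After normalising the first row of $X_0$ to $(1,0)$, $\lambda$ is a root of $v^2-\frac{1+c^2+d^2}{d}v+1$, i.e.\ $\lambda+\lambda^{-1}=\mathrm{tr}(X_0^tX_0)/\det X_0$.

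For $l\ge1$ your plan works, with one correction. The stabiliser of $\mathrm{diag}(1,0)$ in $C^\ast\times C^\ast$ is $\{(aI,a^{-1}I)\mid a\in\RR^\ast\}$, so the scalars are reciprocal, not equal. Hence the leading coefficient of $\det\phi$ is unchanged, which gives (ii) more directly than tracking the $(2,2)$-entry.

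Finally, when you carry out (b) and (c), $\Hom_A(L,P)=[\idm\;\idm]$ forces $k\ge1$ whenever the $L$-row carries the minimal valuation. The correct ranges are therefore:
\begin{itemize}
\item $k\in\NN$, $l\in\NN_0$ in (b) and in the $\mathrm{diag}(1,t^l)$ subcase of (c);
\item $k\in\NN_0$, $l\in\NN$ in the $\mathrm{diag}(t^l,1)$ subcase of (c).
\end{itemize}
These are the ranges in the paper's proof; the ranges printed in the statement are swapped.
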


\begin{proof} (I) By assumption, we have an $A$-linear epimorphism $Q \stackrel{\pi}\rightarrowdbl M$. The kernel $F$ of $\pi$ is an $A$-lattice of rank one. By Lemma \ref{L:RR}, we have: $F \cong Q$ or $F \cong L$, which implies the first part of the statement. 

\smallskip
\noindent
(II) By assumption, we have an $A$-linear epimorphism $P \stackrel{\pi}\rightarrowdbl M$. The kernel $F$ of $\pi$ is an $A$-lattice of rank two, and we have a short exact sequence
\begin{equation}
0 \lar F \stackrel{\phi}\lar P \stackrel{\pi}\lar M \lar 0.
\end{equation}
 By Lemma \ref{L:RR}, we have:
$F \in \bigl\{Q \oplus Q, L \oplus L, L \oplus Q, P\bigr\}$. In each of these cases, it remains to describe the normal form of $\phi \in \Hom_A^\circ(F, P)$ under the natural action of the group $\Aut_A(F) \times \Aut_A(P)$, since such orbits correspond precisely to the isomorphism classes of absolutely cyclic modules with simple top $S$. In terms of the identification (\ref{E:DescrMorphisms}) of the morphism spaces, we obtain a ``matrix problem''
\begin{equation}\label{E:MatrixProblem}
\phi \mapsto \eta  \, \phi \,  \xi, \; \mbox{\rm where} \; \phi\in \Hom_A^\circ(F, P), \eta \in \Aut_A(F) \;  \mbox{\rm and} \; \xi \in \Aut_A(P). 
\end{equation}
A straightforward computation shows that 
$$
\Hom_A(L, P) \cong \bigl[\idm \; \idm \bigr] \; \mbox{\rm whereas } \; 
\Hom_A(Q, P) \cong \bigl[R \; R\bigr]. 
$$
It follows that
$
\Hom_A(Q^{\oplus 2}, P) = \left[\begin{array}{cc}
R & R \\ R & R
\end{array} \right]$,  $\Hom_A(L^{\oplus 2}, P) = \left[\begin{array}{cc}
\idm & \idm \\ \idm  & \idm
\end{array} \right] = t \left[\begin{array}{cc}
R & R \\ R & R
\end{array} \right]$, whereas   $
\Hom_A(L \oplus Q, P) = \left[\begin{array}{cc}
\idm & \idm \\ R & R
\end{array} \right].
$

\smallskip
\noindent
(a) Suppose that $F \cong Q^{\oplus 2}$. Then $\phi \in \Hom^\circ_A(Q^{\oplus 2}, P) \subset \left[\begin{array}{cc}
R & R \\ R & R
\end{array} \right]$ is a matrix whose determinant is non-zero. There exists a uniquely determined $k \in \NN_0$ such that 
\begin{equation}\label{E:FirstRed}
\phi = t^k 
\left( 
\begin{array}{cc}
a & b \\
c & d
\end{array}
\right) + t^{k+1} \left( 
\begin{array}{cc}
a' & b' \\
c' & d'
\end{array}
\right),
\end{equation}
where $0 \ne \left( 
\begin{array}{cc}
a & b \\
c & d
\end{array}
\right) \in M_2(\RR)$ and $\left( 
\begin{array}{cc}
a' & b' \\
c' & d'
\end{array}
\right) \in M_2(R)$. Since $\left( 
\begin{array}{cc}
0 & 1 \\
1 & 0
\end{array}
\right) \in \GL_2(\RR) = \Aut_A(Q^{\oplus 2})$, we may assume, without loss of generality, that $(a, b) \ne (0, 0)$. Next, there exists $\xi \in C$ such that 
$
\left( 
\begin{array}{cc}
a & b \\
c & d
\end{array}
\right) \xi = 
\left( 
\begin{array}{cc}
1 & 0 \\
\ast & \ast
\end{array}
\right).
$
The transformation rule (\ref{E:MatrixProblem}) allows one to add any multiple of $t$ of any 
row of $\phi$ to another row of $\phi$. Therefore, we have: $\phi \sim t^k \left(\begin{array}{cc} 1 & 0 \\ 0 & t^l \end{array}\right)$ for some $k, l \in \NN_0$. Since $\mathsf{Rad}(P) \cong Q^{\oplus 2}$, the morphism $\phi$ admits, up to automorphisms of the source and the target, the following factorization: 
$$
Q^{\oplus 2} \xrightarrow{t^k \left(\begin{smallmatrix} 1 & 0 \\ 0 & t^l \end{smallmatrix}\right)} Q^{\oplus 2} \subset P.
$$
It follows that dimension vector of $\mathsf{Cok}(\phi)$ is $(2k+l+1)[S] + (2k +l)[T]$. 

\smallskip
\noindent
(b) Now assume that $F \cong L^{\oplus 2}$. As in case (a), one can show that for any $\phi \in \Hom^\circ_A(L^{\oplus 2}, P) \subset  t \left[\begin{array}{cc}
R & R \\ R & R
\end{array} \right]$ there exist $k \in \NN$ and $l \in \NN_0$ such that $\phi \sim t^k \left(\begin{array}{cc} 1 & 0 \\ 0 & t^l \end{array}\right)$. Again, up to automorphisms of the source and the target, $\phi$ admits a factorization
$$
L^{\oplus 2} \xrightarrow{t^k \left(\begin{smallmatrix} 1 & 0 \\ 0 & t^l \end{smallmatrix}\right)} Q^{\oplus 2} \subset P.
$$
It follows that the dimension vector of $\mathsf{Cok}(\phi)$ is $(2k+l-1, 2k +l)$. 

\smallskip
\noindent
(c) We now consider the case $F \cong L \oplus Q$. Then $
\phi \in \Hom^\circ_A(L \oplus Q, P) \subset  \left[\begin{array}{cc}
\idm & \idm \\ R & R
\end{array} \right]
$
can be written in the form (\ref{E:FirstRed}). We distinguish the following cases.

\begin{enumerate}
\item[(i)] If $(a, b) \ne (0, 0)$, then there exists $\xi \in C$  such that 
$
\left( 
\begin{array}{cc}
a & b \\
c & d
\end{array}
\right) \xi = 
\left( 
\begin{array}{cc}
1 & 0 \\
\ast & \ast
\end{array}
\right).
$
Similarly to the previous cases, one can show that   $\phi \sim t^k \left(\begin{array}{cc} 1 & 0 \\ 0 & t^l \end{array}\right)$ for some $k \in \NN$ and $l \in \NN_0$. Up to automorphisms of the source and the target, $\phi$ admits a factorization
$$
L \oplus Q\xrightarrow{t^k \left(\begin{smallmatrix} 1 & 0 \\ 0 & t^l \end{smallmatrix}\right)} Q \oplus Q \subset P.
$$
It follows that the dimension vector of $\mathsf{Cok}(\phi)$ is $(2k+l, 2k +l)$. 
\item[(ii)] If $(a, b) =  (0, 0)$, then $(c, d) \ne   (0, 0)$. Hence, there exists $\xi \in C$  such that 
$
\left( 
\begin{array}{cc}
0 & 0 \\
c & d
\end{array}
\right) \xi = 
\left( 
\begin{array}{cc}
0 & 0 \\
0 & 1
\end{array}
\right).
$ It follows that $\phi \sim t^k \left(\begin{array}{cc} t^l & 0 \\ 0 & 1 \end{array}\right)$ for some $k \in \NN_0$ and $l \in \NN$. Up to automorphisms of the source and the target, $\phi$ admits a factorization
$$
L \oplus Q\xrightarrow{t^k \left(\begin{smallmatrix} t^l & 0 \\ 0 & 1 \end{smallmatrix}\right)} Q \oplus Q \subset P.
$$
It follows that the dimension vector of $\mathsf{Cok}(\phi)$ is again $(2k+l, 2k +l)$. 
\end{enumerate}
Note that the modules of types (i) and (ii) cannot be isomorphic, since their radicals are non-isomorphic.

\smallskip
\noindent
(d) Finally, consider the case $F \cong P$. Let 
\begin{equation}\label{E:Ansatz}
\phi = t^k 
\left( 
\begin{array}{cc}
a & b \\
c & d
\end{array}
\right) + t^{k+1} \left( 
\begin{array}{cc}
a' & b' \\
c' & d'
\end{array}
\right)
\in M_2(\idm) \subset \End_A^\circ(P)
\end{equation}
  for some $k \in \NN$, where $0 \ne \left( 
\begin{array}{cc}
a & b \\
c & d
\end{array}
\right) \in M_2(\RR)$ and $\left( 
\begin{array}{cc}
a' & b' \\
c' & d'
\end{array}
\right) \in M_2(R)$. Since $\left( 
\begin{array}{cc}
0 & 1 \\
-1 & 0
\end{array}
\right) \in C \subset \Aut_A(P)$, we may assume, without loss of generality, that $a \ne 0$. Moreover, there exists $\xi \in C$ such that $
\left( 
\begin{array}{cc}
a & b \\
c & d
\end{array}
\right) \xi = 
\left( 
\begin{array}{cc}
1 & 0 \\
\ast & \ast
\end{array}
\right).
$
The proof of the following result is straightforward. 

\smallskip
\noindent
\underline{Claim}. Let  $c,\lambda \in \RR$ and $d \in \RR^\ast$. There exist $\eta, \xi \in C$ such that 
\begin{equation}\label{E:Pseudodiag}
\eta \left( 
\begin{array}{cc}
1 & 0 \\
c & d
\end{array}
\right) \xi = 
\left( 
\begin{array}{cc}
1 & 0 \\
0 & \lambda
\end{array}
\right).
\end{equation}
if and only if $\lambda$ is a root of the polynomial
$
\pi_{c, d} = v^2 - \dfrac{d^2 + c^2 +1}{d} v + 1 \in \RR[v].
$
In particular, such $\lambda$ exists for every $c,d$. 

Hence, if $\phi \in M_2(\idm)$ given by (\ref{E:Ansatz}) is such that $\left|\begin{array}{cc}
a & b \\
c & d
\end{array} \right| \ne 0$, then $\phi \sim t^k \left( 
\begin{array}{cc}
1 & 0 \\
0 & \lambda
\end{array}
\right)$ for some $\lambda \in \RR^\ast$. Moreover, $t^k \left( 
\begin{array}{cc}
1 & 0 \\
0 & \lambda
\end{array}
\right) \sim t^{k'} \left( 
\begin{array}{cc}
1 & 0 \\
0 & \lambda'
\end{array}
\right)$ if and only if $k' = k$ and $\lambda' \in \left\{\lambda, \lambda^{-1} \right\}$.

We now consider the degenerate case when $\left|\begin{array}{cc}
a & b \\
c & d
\end{array} \right| =  0$. It is easy to see that there exist $\eta, \xi \in C$ such that 
$
\eta \left( 
\begin{array}{cc}
a & b \\
c & d
\end{array}
\right) \xi = 
\left( 
\begin{array}{cc}
1 & 0 \\
0 & 0
\end{array}
\right).
$ It follows that $\phi \sim t^k \left( 
\begin{array}{cc}
1 & 0 \\
0 & \lambda t^l 
\end{array}
\right)$ for some $\lambda \in \RR^\ast$ and $l \in \NN$. Hence, 
$\mathsf{det}(\phi) =  \mu t^{k+l}$ for some $\mu \in \RR^\ast$. Since $k$ appearing in (\ref{E:Ansatz}) is uniquely determined by the equivalence class of $\phi$, the second parameter $l \in \NN$ is uniquely determined by the class of $\phi$ as well. A straightforward computation shows that for $k, l, \in \NN$ and $\lambda, \lambda' \in \RR^\ast$ the following statement holds: 
$$
t^k \left( 
\begin{array}{cc}
1 & 0 \\
0 & \lambda t^l 
\end{array}
\right) \sim t^k \left( 
\begin{array}{cc}
1 & 0 \\
0 & \lambda' t^l 
\end{array}
\right) \;  \mbox{if and only if} \; \lambda = \lambda'. 
$$
Now, let $\phi = t^k \left(\begin{smallmatrix} 1 & 0 \\ 0 & \lambda t^l\end{smallmatrix}\right)$ for some $k \in \NN, l \in \NN_0$ and $\lambda \in \RR^\ast$.  Then the morphism $P \stackrel{\phi}\lar P$ admits the following factorisation:
$$
P \subset L \oplus L \xrightarrow{t^k \left( 
\begin{smallmatrix}
1 & 0 \\
0 & \lambda t^l 
\end{smallmatrix}
\right)} Q \oplus Q = \mathsf{rad}(P) \subset P. 
$$
It follows that the dimension vector of $\mathsf{Cok}(\phi)$ is $(2k+l, 2k+l)$, as asserted. 
\end{proof}

\begin{remark} The real Gelfand order $A$ is an example of a quadratic order of Iyama \cite{Iyama} (since  quadratic orders form a large subclass of nodal orders). Methods of \cite{Iyama} give,  in principle, a classification of all indecomposable objects of $\Rep(A)$. Moreover, indecomposable objects of $\Rep(A)$ can also be studied using the classification   of  indecomposable representations of semi-linear clannish algebras  due to Bennett-Tennenhaus and Crawley-Boevey \cite{BTCB}. However, Theorem \ref{T:Schurian} and Theorem \ref{T:AbsCyclic} does not seem to follow from these classifications. 
\end{remark}

\section{Some categorical constructions related with field extensions}\label{SC:CatConstr}
\begin{definition}
Let $\kk$ be a field, $\cC$ be a $\kk$-linear category and $\Gamma$ be a $\kk$-algebra. The category $\cC_\Gamma$ is defined as follows:
\begin{enumerate}
\item[(a)] $\mathsf{Ob}\bigl(\cC_\Gamma\bigr) = \mathsf{Ob}(\cC)$.
\item[(b)] $\Hom_{\cC_\Gamma}(X, Y) =  \Gamma \otimes_{\kk} \Hom_{\cC}(X, Y)$ for all $X, Y \in  \mathsf{Ob}\bigl(\cC_\Gamma\bigr) = \mathsf{Ob}(\cC)$.
\item[(c)] The composition of morphisms is induced by the composition of morphisms in $\cC$ and product in $\Gamma$. 
\end{enumerate}
\end{definition}

\noindent
Note that $\cC_\Gamma$ is again a  $\kk$-linear category and we have a  $\kk$-linear functor $\cC \lar \cC_\Gamma$, which is trivial on objects and which sends  each morphism $f$ to $\mathbbm{1} \otimes f$. 

For any additive category $\cD$ we denote by $\cD^\omega$ its idempotent completion. 

\begin{lemma}\label{L:IndMatrixAlg} Let $\cC$ be an additive $\kk$-linear category. 
For any $n \in \NN$, let $\cC_n:= \cC_{M_n(\kk)}$ and $\widetilde{\cC}_n = \cC_{n}^\omega$. Then there exists a $\kk$-linear equivalence of categories $\cC^\omega \xrightarrow{\mathbbm{I}} \widetilde{\cC}_n$. In particular, if $\cC$ is idempotent complete then $\cC \simeq \widetilde{\cC}_n$ for any $n \in \NN$. 
\end{lemma}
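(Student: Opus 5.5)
The plan is to construct $\mathbbm{I}$ first on $\cC$ and then extend it to idempotent completions. Fix the matrix unit $e_{11} \in M_n(\kk)$. For every object $X$ of $\cC$, the morphism $\varepsilon_X := e_{11} \otimes \mathbbm{1}_X \in M_n(\kk)\otimes_\kk \End_\cC(X) = \End_{\cC_n}(X)$ is an idempotent. I define a functor $\cC \lar \widetilde{\cC}_n$ by $X \mapsto (X, \varepsilon_X)$ on objects. On morphisms it sends $f \in \Hom_\cC(X,Y)$ to $e_{11}\otimes f$, which is a morphism $(X,\varepsilon_X) \lar (Y,\varepsilon_Y)$ in the Karoubi envelope.

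This functor is fully faithful. Indeed,
$$\Hom_{\widetilde{\cC}_n}\bigl((X,\varepsilon_X),(Y,\varepsilon_Y)\bigr) = \varepsilon_Y \bigl(M_n(\kk)\otimes \Hom_\cC(X,Y)\bigr)\varepsilon_X = e_{11}M_n(\kk)e_{11}\otimes \Hom_\cC(X,Y) \cong \Hom_\cC(X,Y),$$
and this identification is compatible with composition and is $\kk$-linear. Since $\widetilde{\cC}_n$ is idempotent complete (being a Karoubi envelope), the universal property of $\cC^\omega$ extends this fully faithful additive functor to a fully faithful functor $\mathbbm{I}\colon \cC^\omega \lar \widetilde{\cC}_n$, given by $(X,p) \mapsto (X, e_{11}\otimes p)$.

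The remaining step, and the only real content, is essential surjectivity. Every object of $\widetilde{\cC}_n$ is a direct summand of an object $(X, \mathbbm{1})$ with $X \in \mathsf{Ob}(\cC)$. I claim that $(X,\mathbbm{1}) \cong (X^{\oplus n}, \varepsilon_{X^{\oplus n}})$ in $\widetilde{\cC}_n$; this requires $\cC$ to be additive. To prove it, write $\iota_i, \pi_i$ for the injections and projections of $X^{\oplus n}$, and set
$$\alpha = \sum_i e_{1i}\otimes \iota_i \colon X \lar X^{\oplus n}, \qquad \beta = \sum_i e_{i1}\otimes \pi_i \colon X^{\oplus n} \lar X .$$
A direct computation gives $\beta\alpha = \sum_i e_{ii}\otimes \mathbbm{1}_X = \mathbbm{1}$ and $\alpha\beta = \sum_{i} e_{11}\otimes \iota_i\pi_i = \varepsilon_{X^{\oplus n}}$. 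So $\alpha$ and $\beta$ give mutually inverse isomorphisms between $(X,\mathbbm{1})$ and $(X^{\oplus n},\varepsilon_{X^{\oplus n}})$, and hence $(X,\mathbbm{1})$ lies in the image of $\mathbbm{I}$.

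Since $\cC^\omega$ is idempotent complete and $\mathbbm{I}$ is fully faithful, the essential image of $\mathbbm{I}$ is closed under direct summands. Every object of $\widetilde{\cC}_n$ is a summand of some $(X,\mathbbm{1})$, so $\mathbbm{I}$ is essentially surjective and therefore an equivalence. The final assertion follows because $\cC \simeq \cC^\omega$ when $\cC$ is idempotent complete.

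I expect the main obstacle to be this isomorphism $(X,\mathbbm{1}) \cong (X^{\oplus n}, \varepsilon)$, in particular checking the identities for $\alpha$ and $\beta$. The key bookkeeping point is that morphisms between sums in $\cC_n$ are matrices over $M_n(\kk)\otimes\Hom_\cC$. It also has to be verified that additivity of $\cC$ passes to $\cC_n$: the same biproduct diagrams $\mathbbm{1}\otimes\iota_i$, $\mathbbm{1}\otimes\pi_i$ still work there.
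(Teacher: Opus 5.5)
Your proof is correct and follows the same route as the paper. Both use the functor $X \mapsto (X, e_{11}\otimes \mathbbm{1}_X)$, get full faithfulness from $\varepsilon_Y \Hom_{\cC_n}(X,Y)\varepsilon_X \cong \Hom_\cC(X,Y)$, and get essential surjectivity from $(X,\mathbbm{1})$ being isomorphic to the image of $X^{\oplus n}$. Your explicit maps $\alpha,\beta$ make precise two steps the paper leaves implicit: its appeal to the conjugacy of the idempotents $e_{kk}\otimes\mathbbm{1}_X$, and the identification $(X,u_X)^{\oplus n}\cong \mathbbm{I}(X^{\oplus n})$.
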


\begin{proof} First note that for any $X \in \mathsf{Ob}\bigl(\cC_n\bigr) = \mathsf{Ob}(\cC)$ we have isomorphisms of $\kk$-algebras:
$$
\End_{\cC_n}(X) = M_n(\kk) \otimes_{\kk} \End_{\cC}(X) \cong M_n\bigl(\End_{\cC}(X)\bigr).
$$ 
For any $1 \le k \le n$ we put: $u^{(k)}_X = e_{kk} \otimes \mathbbm{1}_X$, where $e_{kk} \in M_n(\kk)$ is the $k$-th standard primitive idempotent. It is clear that $\bigl(u_X^{(k)}\bigr)_{1 \le k \le n}$ 
is a family of orthogonal idempotents in $\End_{\cC_n}(X)$. We put: $u_X:= u^{(1)}_X$. Then $(X, u_X)$ is an object of the idempotent completion $\widetilde{\cC}_n$
Next, for any $X, Y \in \mathsf{Ob}(\cC)$, we have natural isomorphisms of vector spaces over $\kk$: 
$$
\Hom_{\widetilde{\cC}_n}\bigl((X, u_X), (Y, u_Y)\bigr) = 
u_Y  \Hom_{\widetilde{\cC}_n}(X, Y) u_X \cong \Hom_{\cC}(X, Y).
$$
As a consequence, we get a fully faithful $\kk$-linear functor $\cC \xrightarrow{\mathbbm{I}} \widetilde{\cC}_n, X \mapsto (X, u_X)$. Since for any $1 \le k, l \le n$ the idempotents $u_X^{(k)}$ and 
$u_X^{(k)}$ are conjugate, we have: 
$$
(X, \mathbbm{1}_X) \cong \bigoplus\limits_{k = 1}^n (X, u^{(k)}_X) \cong (X, u_X)^{\oplus n}.
$$
Since any  object of $\widetilde{\cC}_n$ has the form $(X, e)$, where $X \in \mathsf{Ob}\bigl(\cC_n\bigr)$ and
$e \in \End_{\cC_n}(X)$ is an idempotent, it is a direct summand of $(X, \mathbbm{1}_X) \cong 
\mathbbm{I}(X^{\oplus n})$. The universal property of the idempotent completion implies that we have an equivalence of categories $\cC^\omega \xrightarrow{\mathbbm{I}} \widetilde{\cC}_n$.
\end{proof}

The following result is, of course, well-known and we provide its proof for a convenience of the reader. 
\begin{lemma}\label{L:ExtScalars}
Let $\Gamma$ be a $\kk$-algebra and $\kk \subseteq \kk'$ be a finite field extension. Then for any 
$M, N \in \Gamma\mbox{--}\mathsf{Mod}$ the canonical map
\begin{equation}\label{E:ExtCan}
\kk' \otimes_{\kk} \Hom_{\Gamma}(M, N)  \lar \Hom_{\kk' \otimes_{\kk} \Gamma}(\kk' \otimes_{\kk} M, \kk' \otimes_{\kk} N), \; a \otimes f \mapsto \bigl(b \otimes x \mapsto ab \otimes f(x)\bigr)
\end{equation}
is an isomorphism of $\kk'$-vector spaces. 
\end{lemma}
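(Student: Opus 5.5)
The plan is to reduce to the case where the $\Gamma$-module $M$ is free, using the fact that $\kk'$ is a finite free $\kk$-module, and then conclude by a five-lemma argument.

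First, fix a $\kk$-basis $\omega_1, \dots, \omega_d$ of $\kk'$, where $d = [\kk' : \kk] < \infty$. Every element of $\kk' \otimes_{\kk} N$ can then be written uniquely as $\sum_i \omega_i \otimes n_i$ with $n_i \in N$. Likewise, $\kk' \otimes_{\kk} \Hom_\Gamma(M,N) \cong \Hom_\Gamma(M,N)^{\oplus d}$. Checking that the canonical map (\ref{E:ExtCan}) is well defined and $\kk'$-linear is routine.

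\emph{Injectivity.} Suppose $\sum_i \omega_i \otimes f_i$ maps to zero. Evaluate the image on $1 \otimes x$ to get $\sum_i \omega_i \otimes f_i(x) = 0$ in $\kk' \otimes_{\kk} N$. Uniqueness of the decomposition then gives $f_i(x) = 0$ for all $i$ and all $x$, so $f_i = 0$.

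\emph{Surjectivity.} Let $g$ be a $\kk'\otimes_{\kk}\Gamma$-linear map $\kk' \otimes_{\kk} M \to \kk' \otimes_{\kk} N$. For $x \in M$, write $g(1 \otimes x) = \sum_i \omega_i \otimes f_i(x)$, which defines $f_i \colon M \to N$. Each $f_i$ is $\kk$-linear. Each $f_i$ is also $\Gamma$-linear: $g$ commutes with $1 \otimes \gamma$, and $1 \otimes \gamma$ acts on the second tensor factor only, so uniqueness of coefficients gives $f_i(\gamma x) = \gamma f_i(x)$. Since $g$ is $\kk'$-linear, $g(b \otimes x) = b\, g(1 \otimes x)$, so $g$ is the image of $\sum_i \omega_i \otimes f_i$.

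I expect no genuine obstacle. The argument works for arbitrary (not necessarily finitely generated) $M$ and $N$, and the only point needing care is where finiteness of $\kk'/\kk$ is used. It enters through the finite basis, which makes $\kk'\otimes_{\kk}\Hom_\Gamma(M,N)$ a finite direct sum of copies of $\Hom_\Gamma(M,N)$ and gives finitely many coefficient maps $f_i$. For an infinite extension this could fail for infinitely generated $M$, since a tensor product does not commute with infinite products.
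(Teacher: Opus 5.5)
Your argument is correct, but it follows neither the plan announced in your first sentence nor the paper's proof. Your opening line says you will reduce to free $M$ and finish with a five-lemma argument. That is essentially the paper's strategy.

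The paper argues functorially in $M$. Both sides are left exact contravariant functors, because $\kk'\otimes_{\kk}(\,\cdot\,)$ is exact. The map is an isomorphism for $M=\Gamma$. It is then an isomorphism for every free module, because $\kk'$ is finite-dimensional and so $\kk'\otimes_{\kk}(\,\cdot\,)$ commutes with the products $\Hom_\Gamma(\Gamma^{(I)},N)\cong N^I$. Finally a free presentation $F_1\to F_0\to M\to 0$ and left exactness give the isomorphism for arbitrary $M$.

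What you actually carry out is a direct coordinate computation. You fix a $\kk$-basis $\omega_1,\dots,\omega_d$ of $\kk'$, write $g(1\otimes x)=\sum_i\omega_i\otimes f_i(x)$, and get $\Gamma$-linearity of the $f_i$ from uniqueness of coefficients. This constructs the inverse explicitly and uses neither a reduction to free modules nor the five lemma. You should therefore delete or rewrite the opening sentence, since nothing in your proof depends on it.

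Your route is more elementary and self-contained. It makes transparent that the only input is that $\kk'$ is free of finite rank over $\kk$, and it shows injectivity holds for any extension. The paper's route is the standard base-change argument. It isolates the one nontrivial point, namely commuting $\kk'\otimes_{\kk}(\,\cdot\,)$ with infinite products coming from free modules of infinite rank. It also transfers verbatim to other flat base changes with that property.
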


\begin{proof}  For any $N \in \Gamma\mbox{--}\mathsf{Mod}$ we have a natural transformation of left exact contravariant functors 
$$
 \kk' \otimes_{\kk} \Hom_{\Gamma}(\,-\,, N)   \stackrel{\vartheta}\lar \Hom_{\kk' \otimes_{\kk} \Gamma}(\kk' \otimes_{\kk}\,-\,, \kk' \otimes_{\kk} N)
$$
from the category $\Gamma\mbox{--}\mathsf{Mod}$ to the category of vector spaces over $\kk'$, given by the family of maps (\ref{E:ExtCan}). It is clear that for  the regular module $\Gamma = \, {_{\Gamma}\Gamma}$, the corresponding map $\theta_{\Gamma}$ is an isomorphism. As $\kk'$ is finite, $\vartheta_{F}$ is an isomorphism for any free $\Gamma$-module $F$. Taking free presentations, we conclude that $\vartheta_M$ is an isomorphism for any $M \in \Gamma\mbox{--}\mathsf{Mod}$.
\end{proof}

\begin{lemma}\label{L:ExtScalars2}
Let $\Gamma$ be a $\kk$-algebra, $\kk \subseteq \kk'$ be a field extension and $\Gamma' = \kk' \otimes_{\kk} \Gamma$. Then we have a fully faithful functor 
$$
\bigl(\Gamma\mbox{--}\mathsf{Mod}\bigr)_{\kk'} \stackrel{\mathbbm{J}}\lar \Gamma'\mbox{--}\mathsf{Mod}, \; M \longmapsto \kk' \otimes_{\kk} M. 
$$
Moreover, if $\kk \subseteq \kk'$ is a finite separable extension then the induced functor 
$$
\bigl(\Gamma\mbox{--}\mathsf{Mod}\bigr)_{\kk'}^{\omega} \stackrel{\widetilde{\mathbbm{J}}}\lar \Gamma'\mbox{--}\mathsf{Mod}
$$
is an equivalence of categories. 
\end{lemma}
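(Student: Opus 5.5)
Both claims reduce to comparing morphism spaces, for which Lemma \ref{L:ExtScalars} (or its infinite-dimensional analogue) is the tool, followed by a splitting argument built from the separability idempotent.

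\emph{Full faithfulness of $\mathbbm{J}$.} On objects, $\mathbbm{J}$ sends $M$ to $\kk'\otimes_\kk M$. On morphisms it is the map $\kk'\otimes_\kk \Hom_\Gamma(M,N)\to \Hom_{\Gamma'}(\kk'\otimes_\kk M,\kk'\otimes_\kk N)$ given by (\ref{E:ExtCan}). One checks directly that this map respects composition, since composition in $\bigl(\Gamma\mbox{--}\mathsf{Mod}\bigr)_{\kk'}$ is multiplication in $\kk'$ together with composition in $\Gamma\mbox{--}\mathsf{Mod}$.
\begin{itemize}
\item If $\kk'/\kk$ is finite, bijectivity is exactly Lemma \ref{L:ExtScalars}.
\item If $\kk'/\kk$ is infinite, the same argument works for finitely presented $M$, but in general injectivity still holds while surjectivity can fail. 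So for arbitrary modules I would either restrict to finite extensions here, or note that only the finite case is used later.
\end{itemize}

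\emph{Setup for the equivalence.} Assume $\kk'/\kk$ is finite separable. Since $\Gamma'\mbox{--}\mathsf{Mod}$ is idempotent complete, $\mathbbm{J}$ extends uniquely to $\widetilde{\mathbbm{J}}$ on $\bigl(\Gamma\mbox{--}\mathsf{Mod}\bigr)_{\kk'}^{\omega}$, and $\widetilde{\mathbbm{J}}$ is again fully faithful. It therefore remains to show that every $\Gamma'$-module $N$ is a direct summand of some $\kk'\otimes_\kk M$.

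\emph{Essential surjectivity.} Take $M = \mathrm{res}(N)$, the $\Gamma$-module obtained by restricting scalars. The multiplication map $\mu\colon \kk'\otimes_\kk N \to N$, $a\otimes x\mapsto ax$, is $\Gamma'$-linear and surjective.
\begin{itemize}
\item By separability, $\kk'\otimes_\kk\kk'$ is a separable $\kk'$-algebra, so $\kk'$ is a projective $\kk'\otimes_\kk\kk'$-bimodule. Concretely, there is a separability idempotent $\varepsilon=\sum_i a_i\otimes b_i$ with $\sum_i a_ib_i=1$ and $(c\otimes 1)\varepsilon=(1\otimes c)\varepsilon$ for all $c \in \kk'$.
\item Define $\sigma\colon N\to \kk'\otimes_\kk N$ by $x\mapsto \sum_i a_i\otimes b_ix$.
\item This $\sigma$ is $\Gamma$-linear because $\Gamma$ commutes with $\kk'$ inside $\Gamma'$. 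It is $\kk'$-linear by the defining property of $\varepsilon$. Hence $\sigma$ is $\Gamma'$-linear.
\item Finally $\mu\sigma=\mathrm{id}_N$, so $N$ is a direct summand of $\mathbbm{J}(M)$. It is therefore the image under $\widetilde{\mathbbm{J}}$ of the pair $(M,\sigma\mu)$, where $\sigma\mu$ is an idempotent endomorphism in $\bigl(\Gamma\mbox{--}\mathsf{Mod}\bigr)_{\kk'}$, transported through full faithfulness.
\end{itemize}

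The main obstacle I expect is the $\kk'$-linearity of $\sigma$, i.e. producing the separability idempotent explicitly. One route is the primitive element theorem: write $\kk'=\kk[x]/(p)$ with $p$ separable, so $\kk'\otimes_\kk\kk'\cong \kk'[x]/(p)$ splits as a product with the diagonal factor $\kk'$, and take $\varepsilon$ to be the idempotent of that factor. Alternatively, use the trace form, which is nondegenerate precisely because the extension is separable: pick dual bases $\{a_i\}$, $\{b_i\}$ with $\mathrm{tr}(a_ib_j)=\delta_{ij}$, and set $\varepsilon=\sum_i a_i\otimes b_i$.
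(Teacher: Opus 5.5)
Your proposal is correct and is essentially the paper's proof: full faithfulness comes from Lemma~\ref{L:ExtScalars}, and essential surjectivity comes from a separability element of $\kk'\otimes_\kk\kk'$. Your explicit section $\sigma\colon N\to\kk'\otimes_\kk N$ is the paper's central element $w\in\Gamma'\otimes_\Gamma\Gamma'$ (which splits the multiplication map as a bimodule map) tensored with $N$. Your caveat is also well taken: Lemma~\ref{L:ExtScalars} assumes $\kk'/\kk$ finite, so the paper's argument establishes full faithfulness of $\mathbbm{J}$ only in that case, which is all that is used later.
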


\begin{proof}
The fact that the functor $\mathbbm{J}$ is fully faithful, follows from Lemma \ref{L:ExtScalars}.

If $\kk \subseteq \kk'$ is a finite separable field extension then the product map $\kk' \otimes_{\kk} \kk' \stackrel{\mu_\circ}\lar  \kk'$ splits as a morphism of $\kk'$-bimodules. Hence, we have an element
$w_\circ \in \kk' \otimes_{\kk} \kk'$ such that $\mu_\circ(w_\circ) = 1$ and $a w_\circ = w_\circ a$ for all $a \in \kk'$. Next, consider the canonical $\kk'$-linear map $\kk' \otimes_{\kk} \kk' \stackrel{\kappa}\lar \Gamma' \otimes_{\Gamma} \Gamma'$ and put $w = \kappa(w_\circ)$. Then $a w = w a$ for all 
$a \in \Gamma'$ and $\mu(w) = 1$, where $\Gamma' \otimes_{\Gamma} \Gamma' \stackrel{\mu}\lar  \Gamma'$ is the multiplication map. It follows that $\mu$ splits as a morphism of $\Gamma'$-bimodules.

For any $X \in \Gamma'\mbox{--}\mathsf{Mod}$ we have a split epimorphism of $\Gamma'$-modules $\kk' \otimes_{\kk} X \cong \Gamma' \otimes_\Gamma  X \rightarrowdbl X$ given by the composition
$$
\Gamma' \otimes_{\Gamma} X \cong \bigl(\Gamma' \otimes_{\Gamma} \Gamma'\bigr) \otimes_{\Gamma'} X \xrightarrow{\mu \otimes \mathbbm{1}} \Gamma \otimes_\Gamma  X \cong X. 
$$
It follows that any $X \in \Gamma'\mbox{--}\mathsf{Mod}$ is a direct summand of $\mathbbm{J}(X)$, implying that the functor $\widetilde{\mathbbm{J}}$ is essentially surjective. Since it is fully faithful, it is an equivalence of categories.  
\end{proof}

 \begin{remark} Of course, we also get a fully faithful functor $\bigl(\Gamma\mbox{--}\mathsf{mod}\bigr)_{\kk'} \stackrel{\mathbbm{J}}\lar \Gamma'\mbox{--}\mathsf{mod}, \; 
 M \mapsto \kk' \otimes_{\kk} M$ between the corresponding categories of finite dimensional modules. If $\kk \subseteq \kk'$ is a finite separable extension then $\bigl(\Gamma\mbox{--}\mathsf{mod}\bigr)_{\kk'}^{\omega} \stackrel{\widetilde{\mathbbm{J}}}\lar \Gamma'\mbox{--}\mathsf{mod}
$
is an equivalence of categories. 
\end{remark}

\smallskip
\noindent
\textbf{Notation}. In what follows, $G = \bigl\{g_1, \dots, g_t\bigr\}$ is  a finite group and  $g_1 = e$ is its  neutral element.

\begin{definition}\label{D:GroupAction}
Let $\Gamma$ be a $\kk$-algebra and $G \stackrel{\phi}\lar \Aut_{\kk}(\Gamma)$ be a group homomorphism. 
 The corresponding crossed product  $\Gamma\bigl[G, \phi\bigr]$ is a free left $\Gamma$--module of rank $ t = |G|$: 
\begin{equation}
A\bigl[G, \phi] = \Bigl\{\sum\limits_{g \in G} a_g[g]  \, \big| a_g \in \Gamma  \Bigr\}
\end{equation}
equipped with the product given by the rule
\begin{equation}\label{E:SkewProductRule}
a[f] \cdot b[g] := a \phi_f(b) [fg] \; \mbox{\rm for any} \; a, b \in  
\Gamma \; \mbox{\rm and} \;  f, g \in G.
\end{equation}
Here and further we write $\phi_f$ instead of $\phi(f)$ for $f \in G$.
It is not difficult to show that $\Gamma\bigl[G, \phi\bigr]$ is a again a $\kk$-algebra,  whose multiplicative unit element is $1[e]$.
\end{definition}

\begin{remark} In general, the definition of the crossed product $\Gamma[G, (\phi, \omega)]$
involves an appropriate  two-cocycle 
$G \times G \stackrel{\omega}\lar \Gamma^\ast$, where $\Gamma^\ast$ is the group of units of $\Gamma$; see \cite{ReitenRiedtmann}. In our setting,  $\omega$ is trivial and  $\Gamma\bigl[G, (\phi, \omega)\bigr] = \Gamma[G, \phi]$.
\end{remark}

\begin{definition} Let $\cC$ be an $\kk$-linear category and $G $ be a finite group. We say that $G$ acts on $\cC$ if for any $g \in G$ we have a $\kk$-linear auto-equivalence $\Phi_g: \cC \lar \cC$ such that
\begin{enumerate}
\item[(a)] $\Phi_g \Phi_h = \Phi_{gh}$ for all $g, h \in G$.
\item[(b)] $\Phi_e = \mathsf{Id}$. 
\end{enumerate}
Following \cite{DFO}, we  define a new category $\cC[G, \Phi]$ as follows: 
\begin{enumerate}
\item[(a)] $\mathsf{Ob}\bigl(\cC[G, \Phi]\bigr) = \mathsf{Ob}\bigl(\cC\bigr)$. 
\item[(b)] For any $X, Y \in \mathsf{Ob}(\cC)$ we put: 
$$
\Hom_{\cC[G, \Phi]}(X, Y) = \bigoplus\limits_{g \in G} \Hom_{\cC}\bigl(X, \Phi_g(Y)\bigr),
$$
i.e.~a morphism $\xi \in \Hom_{\cC[G, \Phi]}(X, Y)$ is given by a family  $\bigl(X \stackrel{{\xi}_g}\lar \Phi_g(Y)\bigr)_{g \in G}$ of morphisms in $\cC$.
\item[(c)] The composition of morphisms in $\cC[G, \Phi]$ is given by the following rule: 
$$
\bigl(\eta_{g_1}, \dots, \eta_{g_t}\bigr) \circ \bigl(\xi_{g_1}, \dots, \xi_{g_t}\bigr) = \Bigl(\dots, \sum\limits_{f \in G} \Phi_f\bigl(\eta_{f^{-1} g}\bigr), \dots\Bigr)_{g \in G}.
$$
\end{enumerate}
\end{definition}

\begin{remark} In general, the definition of an  action of a finite group $G$ on a category $\cC$ involves a choice of a family of compatible isomorphisms of functors $\bigl(\Phi_g \Phi_h \xrightarrow{\omega_{g, h}} \Phi_{gh}\bigr)_{g, h \in G}$; see \cite{DFO}. However, this general setting is not used in this work. 
\end{remark}

\begin{example}
Let $\kk \subseteq \kk'$ be a finite field  extension and $G = \mathsf{Aut}_{\kk}(\kk')$ be the corresponding automorphism group. For any $g \in G$ let $\kk' \stackrel{\phi_g}\lar \kk'$ be the corresponding $\kk$-linear automorphism.
Then for any $\kk$-linear category $\cC$,  the group $G$ acts $\kk$-linearly on the category $\cC_{\kk'}$:
\begin{enumerate}
\item[(a)] For any $g \in G$ and $X \in \mathsf{Ob}(\cC_{\kk'}) = \mathsf{Ob}(\cC)$ we put: $\Phi_g(X) = X$.
\item[(b)] For any $X, Y \in \mathsf{Ob}(\cC_{\kk'})$ we have by definition: $\Hom_{\cC_{\kk'}}(X, Y) = 
\kk' \otimes_{\kk} \Hom_{\cC}(X, Y)$. Hence, we put:  $\Phi_g(a \otimes \xi) = \phi_g(a) \otimes \xi$ for any $\xi \in \Hom_{\cC}(X, Y)$ and $a \in \kk'$.
\end{enumerate}
\end{example}

\begin{example}
Let $\Gamma$ be a $\kk$-algebra and $G \stackrel{\phi}\lar \Aut_{\kk}(\Gamma)$ be a group homomorphism.
For any left $\Gamma$-module $M$ and $g \in G$ we have a new $\Gamma$-module $_{g}M$ defined as follows:
\begin{enumerate}
\item[(a)] $_{g}M = M$ as a set and a vector space over $\kk$.
\item[(b)] The $\Gamma$-module structure on $_{g}M$ is defined as follows:  for any $a \in \Gamma$ and $x \in M$ we put: $a \ast x = \phi_{g^{-1}}(a) \circ x$, where $\circ$ is the action of $\Gamma$ on $M$. 
\end{enumerate} 
For any $g \in G$ we have a functor $\Gamma\mbox{--}\mathsf{Mod} \xrightarrow{\Phi_g} \Gamma\mbox{--}\mathsf{Mod}$ sending each left $\Gamma$-module $M$ to $_{g}M$ and acting identically  on morphisms. One can check that  $\Phi_g \Phi_h = \Phi_{gh}$ for all $g, h \in G$.
\end{example}

\begin{lemma} Let $\cC$ be a $\kk$-linear category, $\kk \subseteq \kk'$ be a finite field extension and 
$G = \mathsf{Aut}_{\kk}(\kk')$ be the corresponding automorphism group. Then the categories $\cC_{\kk'}[G, \Phi]$ and $\cC_{\kk'[G, \phi]}$ are isomorphic. 
\end{lemma}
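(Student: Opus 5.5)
I will construct an explicit functor $\cC_{\kk'}[G, \Phi] \lar \cC_{\kk'[G,\phi]}$ that is the identity on objects and a bijection on each morphism space. Then I will check that it respects composition and identities. Both categories have $\mathsf{Ob}(\cC)$ as objects. In $\cC_{\kk'}[G,\Phi]$, since $\Phi_g(Y) = Y$ on objects, we have
$$
\Hom_{\cC_{\kk'}[G,\Phi]}(X,Y) = \bigoplus_{g \in G} \Hom_{\cC_{\kk'}}(X, Y) = \bigoplus_{g\in G} \kk' \otimes_{\kk} \Hom_{\cC}(X,Y).
$$
On the other side, $\kk'[G,\phi]$ is a free left $\kk'$-module with basis $\{[g]\}_{g \in G}$. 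This gives
$$
\Hom_{\cC_{\kk'[G,\phi]}}(X,Y) = \kk'[G,\phi] \otimes_{\kk} \Hom_{\cC}(X,Y) \cong \bigoplus_{g\in G} \kk'[g] \otimes_{\kk} \Hom_{\cC}(X,Y).
$$
The natural candidate is the $\kk$-linear bijection $\Psi_{X,Y}$ sending a family $\xi = (\xi_g)_{g\in G}$, with $\xi_g = \sum_i a_{g,i} \otimes f_{g,i}$, to $\sum_{g,i} a_{g,i}[g] \otimes f_{g,i}$. It is clearly bijective, being a sum of identifications $\kk' \otimes_\kk \Hom_\cC(X,Y) \cong \kk'[g]\otimes_\kk \Hom_\cC(X,Y)$, and it sends the identity $(\mathbbm{1}\otimes \mathbbm{1}_X, 0, \dots, 0)$ to $1[e]\otimes \mathbbm{1}_X$.

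The main step is compatibility with composition, and it suffices to check it on pure tensors. In $\cC_{\kk'}[G,\Phi]$, take $\xi$ concentrated in degree $g$ with $\xi_g = a \otimes f$, and $\eta$ concentrated in degree $h$ with $\eta_h = b\otimes k$. I will use the composition rule $(\eta\circ\xi)_{u} = \sum_{v} \Phi_{v}(\eta_{v^{-1}u}) \circ \xi_{v}$. Here the component $\xi_v$ is needed, although the paper's displayed formula suppresses it, so my first task is to read the rule in this standard form from \cite{DFO}. The only nonzero term has $v = g$ and $u = gh$. Since $\Phi_g(b\otimes k) = \phi_g(b)\otimes k$, this term equals $\phi_g(b)a \otimes (k\circ f)$ in degree $gh$.

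In $\cC_{\kk'[G,\phi]}$, composition is $(b[h]\otimes k)\circ(a[g]\otimes f) = b[h]\cdot a[g] \otimes k f = b\,\phi_h(a)[hg] \otimes kf$. The two expressions do not match literally, since one has degree $gh$ and the other $hg$, with $\phi$ applied to different elements. I expect this order and convention mismatch to be the main obstacle. I will fix it by adjusting $\Psi$: either send degree $g$ to $a[g^{-1}]$, or use the opposite ordering convention of the crossed product, so that $b[h]\cdot a[g]$ is matched with the degree-$gh$ component. One convenient choice is to map $(\xi_g)$ with $\xi_g = a\otimes f$ to $[g^{-1}]\,a\otimes f = \phi_{g^{-1}}(a)[g^{-1}]\otimes f$. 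With this, $\Psi(\eta)\Psi(\xi) = [h^{-1}]b[g^{-1}]a = \phi_{h^{-1}}(b)\phi_{h^{-1}g^{-1}}(a)[h^{-1}g^{-1}]$. This equals $[(gh)^{-1}]\,\phi_g(b)a$, which is $\Psi(\eta\circ\xi)$, using $\phi_{(gh)^{-1}}\phi_g = \phi_{h^{-1}}$. This adjusted map is still bijective because $a\mapsto \phi_{g^{-1}}(a)$ is bijective.

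Finally, I extend by bilinearity to conclude that $\Psi$ is a functor that is bijective on objects and on morphism spaces, hence an isomorphism of categories.
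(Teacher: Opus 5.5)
Your proposal is correct and follows the same route as the paper: an identity-on-objects functor given on each $\Hom$-space by an explicit $\kk$-linear bijection $\bigoplus_{g\in G}\kk'\otimes_{\kk}\Hom_{\cC}(X,Y)\to\kk'[G,\phi]\otimes_{\kk}\Hom_{\cC}(X,Y)$. The paper differs only in using the untwisted assignment $a\otimes\xi$ (in degree $g$) $\mapsto a[g]\otimes\xi$ and calling functoriality ``easy to see''. Your check rightly shows that, with the rule $a[f]\cdot b[g]=a\phi_f(b)[fg]$ and the usual order $(b\otimes k)\circ(a\otimes f)=ba\otimes kf$ in $\cC_\Gamma$, that map turns composition into the opposite product on the $\kk'[G,\phi]$-factor; it is functorial only if composition in $\cC_\Gamma$ is read with the opposite product in $\Gamma$. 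Your twist $a\mapsto[g^{-1}]a$, i.e.\ composing with the anti-automorphism $a[g]\mapsto[g^{-1}]a$ of $\kk'[G,\phi]$, is a correct repair. Your reading of the composition rule in $\cC[G,\Phi]$ with the missing factor $\xi_f$ restored is also right.
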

\begin{proof}
 Let $|G|=t$.
By definition, we have: $\mathsf{Ob}\bigl(\cC_{\kk'}[G, \Phi]\bigr) = \mathsf{Ob}\bigl(\cC_{\kk'[G, \phi]}\bigr) = \mathsf{Ob}(\cC)$. Let $X, Y \in \mathsf{Ob}(\cC)$. Then a morphism $\Hom_{\cC_{\kk'}[G,\Phi]}(X, Y)$ is a a tuple $\xi = \bigl(\sum\limits_{i = 1}^{n_k} a_i^{(k)} \otimes \xi_i^{(k)}\bigr)_{1 \le k \le t}$, where 
$\xi_{i}^{(k)} \in \Hom_{\cC}(X, \Phi_{g_k}Y)$ and $a_i^{(k)} \in \kk'$ for all $1\le k \le t$ and $1 \le i \le n_k$. We define 
$\cC_{\kk'}[G,\Phi] \xrightarrow{\EE} \cC_{\kk'[G, \phi]}$ to be identity on objects, whereas a  morphism $\xi$ as above is sent to  $\sum\limits_{k = 1}^t \sum\limits_{i = 1}^{n_k}  
 a_i^{(k)} \bigl[g_k\bigr]\otimes \xi_i^{(k)}$. It is easy to see that $\EE$ is an isomorphism of categories. 
\end{proof}

The following proposition is due to \cite{DFO}. We provide its proof for reader's convenience. 

\begin{theorem}\label{T:DFO} Let $\Gamma$ be a $\kk$-algebra, $G$ be a finite group, 
$G \stackrel{\phi}\lar \Aut_{\kk}(\Gamma)$ be a group homomorphism and $\Lambda = \Gamma[G, \phi]$.  
Then we have a fully faithful $\kk$-linear functor 
\begin{equation}
\Gamma\mbox{--}\mathsf{Mod}[G, \Phi] \xrightarrow{\EE_G} \Lambda\mbox{--}\mathsf{Mod}.
\end{equation}
sending an object $M$ to $\Lambda\otimes_{\Gamma} M$. Moreover, if $t = |G|$ is invertible in $\kk$ then $\EE_G$ induces an equivalence of categories
\begin{equation}
\Bigl(\Gamma\mbox{--}\mathsf{Mod}[G, \Phi]\Bigr)^\omega \xrightarrow{\widetilde{\EE}_G} 
\Lambda\mbox{--}\mathsf{Mod}.
\end{equation}
\end{theorem}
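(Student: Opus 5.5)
The plan is to compute morphism spaces on both sides, check that the functor respects composition, and then use a Maschke-type averaging argument for essential surjectivity.

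\emph{Fullness and faithfulness.} Since $\Lambda$ is free as a right $\Gamma$-module on the basis $\{[g]\}_{g\in G}$, we have, as $\kk$-spaces,
\[
\Lambda\otimes_\Gamma M=\bigoplus_{g\in G}[g]\otimes M .
\]
The action of $a\in\Gamma$ is $a([g]\otimes x)=[g]\otimes \phi_{g^{-1}}(a)x$. Hence the summand $[g]\otimes M$ is a $\Gamma$-submodule isomorphic to ${}_gM=\Phi_g(M)$. By adjunction,
\[
\Hom_\Lambda(\Lambda\otimes_\Gamma M,\Lambda\otimes_\Gamma N)\cong\Hom_\Gamma\bigl(M,\Lambda\otimes_\Gamma N\bigr)\cong\bigoplus_{g\in G}\Hom_\Gamma\bigl(M,\Phi_g(N)\bigr),
\]
and the right-hand side is exactly $\Hom_{\Gamma\mbox{--}\mathsf{Mod}[G,\Phi]}(M,N)$. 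So I define $\EE_G$ on a morphism $(\xi_g)_{g\in G}$ as the $\Lambda$-linear extension of $x\mapsto\sum_g [g]\otimes\xi_g(x)$. The bijectivity on Hom-spaces is then immediate from the adjunction.

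The one real check is that $\EE_G$ respects composition. Composing $\eta$ after $\xi$ means applying $[f]\otimes\eta_h$ on the summand $[f]\otimes M$ and collecting the $[fh]$-components. This is precisely the composition rule in $\cC[G,\Phi]$, where $\Phi_f$ acts identically on underlying maps. I would verify this by a direct calculation with the product rule (\ref{E:SkewProductRule}), taking care with the $g$ versus $g^{-1}$ conventions so that the indices match.

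\emph{Essential surjectivity when $t$ is invertible.} I would mimic the proof of Lemma \ref{L:ExtScalars2}. For $X\in\Lambda\mbox{--}\mathsf{Mod}$, restrict to $\Gamma$ and consider the multiplication map $\mu\colon\Lambda\otimes_\Gamma X\to X$, which is a $\Lambda$-epimorphism. Define the averaged section
\[
\sigma(x)=\tfrac1t\sum_{g\in G}[g]\otimes[g^{-1}]x .
\]
It satisfies $\mu\sigma=\mathbbm{1}_X$. It commutes with each $[h]$ by reindexing the sum. It commutes with $a\in\Gamma$, because $[g]\otimes[g^{-1}]ax=[g]\otimes\phi_{g^{-1}}(a)[g^{-1}]x=a\bigl([g]\otimes[g^{-1}]x\bigr)$. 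Thus $X$ is a direct summand of $\EE_G(\mathrm{Res}\,X)$. Since $\Lambda\mbox{--}\mathsf{Mod}$ is idempotent complete, the universal property of the idempotent completion extends $\EE_G$ to a fully faithful $\widetilde{\EE}_G$, and the splitting just shown makes it essentially surjective. Hence $\widetilde{\EE}_G$ is an equivalence.

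The main obstacle is purely bookkeeping: getting the twisting convention for ${}_gM$ and the composition rule to agree, so that the functor respects composition rather than an opposite or inverse-twisted version of it. Conceptually the argument is routine.
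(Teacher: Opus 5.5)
Your proposal is correct and follows the paper's proof. Full faithfulness comes from the adjunction $\Hom_\Lambda(\Lambda\otimes_\Gamma M,\Lambda\otimes_\Gamma N)\cong\bigoplus_{g\in G}\Hom_\Gamma(M,{}_gN)$, and essential surjectivity from an averaged splitting of the multiplication map $\Lambda\otimes_\Gamma X\to X$. Your section $\sigma(x)=\frac{1}{t}\sum_g[g]\otimes[g^{-1}]x$ is the paper's central element $w=\sum_g[g^{-1}]\otimes[g]\in\Lambda\otimes_\Gamma\Lambda$ (reindexed) applied directly to $X$. Your explicit composition check also fills in a step the paper leaves implicit; there, $\EE_G(\eta)$ acts on the summands $[f]\otimes N$, not $[f]\otimes M$.
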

\begin{proof} Let $M$ and $N$ be arbitrary $\Gamma$-modules. Then we have a natural isomorphism of 
$\Gamma$-modules $\Lambda\otimes_ {\Gamma} N \cong \oplus_{g \in G} \;  {_{g}N}$. 
Next, 
we have an isomorphism of $\kk$-vector spaces:
$$
\Hom_{\Lambda}\bigl(\Lambda\otimes_ {\Gamma} M, \Lambda\otimes_ {\Gamma} N\bigr) \cong \Hom_{\Gamma}(M, \Lambda\otimes_ {\Gamma} N\Bigr) \cong \bigoplus\limits_{g \in G} \Hom_{\Gamma}\bigl(M,\,  _{g}N\bigr),
$$
natural both in $M$ and $N$. Hence, any morphism $\xi \in \Hom_{\Lambda}\bigl(\Lambda\otimes_ {\Gamma} M, \Lambda\otimes_ {\Gamma} N\bigr)$ is uniquely determined  by a family of morphisms of $\Gamma$-modules $\bigl(M \xrightarrow{\xi_g} _{g}N\bigr)_{g \in G}$. This shows that the assignment 
$M \mapsto \Lambda\otimes_{\Gamma} M$ extends to a fully faithful functor 
$\Gamma\mbox{--}\mathsf{Mod}[G, \Phi] \xrightarrow{\EE_G} \Lambda\mbox{--}\mathsf{Mod}$.

\smallskip
\noindent
Next, note that the element
$$
w := \sum\limits_{g \in G} \bigl[g^{-1}\bigr] \otimes [g] \;\in \;  \Lambda \otimes_{\Gamma}
\Lambda
$$
is central. Hence, under the assumption that $t$ is invertible in $\kk$, 
the morphism of $\Lambda$-bimodules 
$
\Lambda  \otimes_{\Gamma} \Lambda  \stackrel{\mu}\lar  \Lambda 
$
given by the multiplication map admits a splitting $\Lambda  \stackrel{\sigma}\lar  
\Lambda  \otimes_{\Gamma} \Lambda$ defined by the assignment $1 \mapsto \frac{1}{t} w$.

For any $X \in \Lambda\mbox{--}\mathsf{Mod}$ we have a split epimorphism of $\Lambda$-modules $\Lambda \otimes_{\Gamma}X \rightarrowdbl X$ given by the composition
$$
\Lambda \otimes_{\Gamma} X \cong \bigl(\Lambda \otimes_{\Gamma} \Lambda\bigr) \otimes_{\Lambda} X \xrightarrow{\mu \otimes \mathbbm{1}} \Lambda \otimes_\Lambda X \cong M. 
$$
It follows that any $X \in \Lambda\mbox{--}\mathsf{Mod}$ is a direct summand of $\mathbbm{E}_G(X)$, implying that the functor $\Bigl(\Gamma\mbox{--}\mathsf{Mod}[G, \Phi]\Bigr)^\omega \xrightarrow{\widetilde{\EE}_G} 
\Lambda\mbox{--}\mathsf{Mod}$ is an equivalence of categories. 
\end{proof}

The following result is well-known, see e.g.~\cite[Theorem 5.6.6]{DrozdKirichenko}.
\begin{proposition}\label{P:GaloisGroup}
Let $\kk \subseteq \kk'$ be a finite Galois extension and $G = \mathsf{Aut}_{\kk}(\kk')$ be the corresponding Galois group. Then we have: 
$
\kk'[G, \phi] \cong M_t(\kk).
$
\end{proposition}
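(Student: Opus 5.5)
We want to show $\kk'[G,\phi]\cong M_t(\kk)$, where $t=|G|=[\kk':\kk]$ and $\kk'[G,\phi]$ is the crossed product with the Galois action. The natural route is to let $\Lambda=\kk'[G,\phi]$ act on $\kk'$ itself and prove that this action identifies $\Lambda$ with $\End_{\kk}(\kk')\cong M_t(\kk)$.

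First, define $\Psi:\Lambda\lar\End_{\kk}(\kk')$ by $a[g]\mapsto\bigl(x\mapsto a\,\phi_g(x)\bigr)$. We check that $\Psi$ is an algebra homomorphism. Using the rule (\ref{E:SkewProductRule}), we have
\[
\Psi\bigl(a[f]\bigr)\Psi\bigl(b[g]\bigr)(x)=a\,\phi_f\bigl(b\,\phi_g(x)\bigr)=a\,\phi_f(b)\,\phi_{fg}(x)=\Psi\bigl(a\phi_f(b)[fg]\bigr)(x),
\]
and $\Psi(1[e])=\mathsf{Id}$.

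Next, compare dimensions over $\kk$. As a left $\kk'$-module, $\Lambda$ is free of rank $t$, so $\dim_{\kk}\Lambda=t\cdot[\kk':\kk]=t^2$. Since the extension is Galois, $|G|=[\kk':\kk]=t$, and therefore $\dim_{\kk}\End_{\kk}(\kk')=t^2$ as well. It thus suffices to prove that $\Psi$ is injective.

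Injectivity says exactly that if $\sum_g a_g\phi_g=0$ as a map $\kk'\to\kk'$ with $a_g\in\kk'$, then all $a_g=0$. This is Dedekind's lemma on the linear independence of distinct characters $\kk'^{\ast}\to\kk'^{\ast}$. Its standard proof goes by a shortest nontrivial relation. Suppose $\sum_{g\in S}a_g\phi_g=0$ with all $a_g\neq0$ and $|S|$ minimal. Then $|S|\ge2$, so we can pick $h\ne h'$ in $S$ and $y\in\kk'$ with $\phi_h(y)\ne\phi_{h'}(y)$. Evaluate the relation at $yx$ to get $\sum_{g\in S} a_g\phi_g(y)\phi_g(x)=0$, and subtract $\phi_h(y)$ times the original relation. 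The result is a nontrivial relation with strictly fewer terms, a contradiction.

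Hence $\Psi$ is an injective map between $\kk$-spaces of the same finite dimension, so it is an isomorphism $\Lambda\cong\End_{\kk}(\kk')$. Choosing a $\kk$-basis of $\kk'$ then gives $\End_{\kk}(\kk')\cong M_t(\kk)$.

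The only non-formal step is the injectivity, i.e.\ Dedekind's lemma. The main point to keep in mind is that the dimension count relies on the Galois hypothesis $|G|=[\kk':\kk]$; without it, $\Psi$ would still be injective but not surjective.
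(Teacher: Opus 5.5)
Your proof is correct and complete. The homomorphism check, the dimension count $\dim_{\kk}\kk'[G,\phi]=t^2=\dim_{\kk}\End_{\kk}(\kk')$ (which is exactly where the Galois hypothesis $|G|=[\kk':\kk]$ is used), and the Dedekind independence argument for injectivity are all sound. The paper gives no argument of its own here and only cites Theorem 5.6.6 of Drozd--Kirichenko. Your realization of the crossed product as $\End_{\kk}(\kk')$ through its natural action on $\kk'$ is the standard self-contained proof of this classical fact.
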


\begin{corollary}\label{C:GaloisExt}
Let $\cC$ be a $\kk$-linear additive category and $\kk \subseteq \kk'$ be a finite Galois extension with the Galois group $G$. Then we have a $\kk$-linear equivalence of categories
$$
\cC^\omega \simeq  \bigl(\cC_{\kk'}[G, \Phi]\bigr)^\omega.
$$
\end{corollary}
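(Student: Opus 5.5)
The plan is to combine Theorem \ref{T:DFO} with Proposition \ref{P:GaloisGroup} and Lemma \ref{L:IndMatrixAlg}, passing through the category $\cC_{\kk'[G,\phi]}$ by means of the preceding lemma. That lemma gives an isomorphism of categories
$$
\cC_{\kk'}[G, \Phi] \cong \cC_{\kk'[G, \phi]}.
$$
By Proposition \ref{P:GaloisGroup}, $\kk'[G,\phi] \cong M_t(\kk)$ as $\kk$-algebras, where $t = |G| = [\kk':\kk]$.

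The first point to check is that the construction $\Gamma \mapsto \cC_\Gamma$ is functorial in the $\kk$-algebra $\Gamma$. Concretely, an isomorphism of $\kk$-algebras $\Gamma \cong \Gamma'$ should induce an isomorphism of categories $\cC_\Gamma \cong \cC_{\Gamma'}$. The functor is the identity on objects and acts on morphisms by $a \otimes f \mapsto \psi(a)\otimes f$. Composition is preserved because composition in $\cC_\Gamma$ is defined through the product in $\Gamma$, and $\psi$ is multiplicative. Combining this with the above gives
$$
\cC_{\kk'}[G,\Phi] \cong \cC_{M_t(\kk)} = \cC_t.
$$

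Next I pass to idempotent completions. Since isomorphic categories have equivalent idempotent completions, we get $\bigl(\cC_{\kk'}[G,\Phi]\bigr)^\omega \simeq \widetilde{\cC}_t$. Lemma \ref{L:IndMatrixAlg} supplies a $\kk$-linear equivalence $\cC^\omega \simeq \widetilde{\cC}_t$ for an additive $\kk$-linear category $\cC$, which is exactly our hypothesis. Composing the two equivalences gives the claim.

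The main obstacle lies in the bookkeeping. The isomorphism in the preceding lemma must respect $\kk$-linear structures and composition. It also has to be compatible with the multiplication in $\kk'[G,\phi]$, since the twisting $a[f]\cdot b[g] = a\phi_f(b)[fg]$ has to match the composition rule in $\cC_{\kk'}[G,\Phi]$. For this corollary I take that lemma as given, so only the functoriality in $\Gamma$ needs to be verified, and that check is routine.
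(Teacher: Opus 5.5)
Your proposal is correct and is the paper's argument. The paper's proof just cites Lemma \ref{L:IndMatrixAlg} and Proposition \ref{P:GaloisGroup}, and tacitly relies on the unlabelled lemma $\cC_{\kk'}[G,\Phi]\cong\cC_{\kk'[G,\phi]}$ and on the functoriality of $\Gamma\mapsto\cC_\Gamma$, both of which you spell out; your mention of Theorem \ref{T:DFO} in the first sentence plays no role and can be dropped.
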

\begin{proof}
This is a consequence of Lemma \ref{L:IndMatrixAlg} and Proposition \ref{P:GaloisGroup}.
\end{proof}

\section{Principal block of the category of real Harish-Chandra modules for the group $\mathsf{SL}_2(\RR)$}\label{S:HCModules}

\noindent
Let $\lieg = \mathfrak{sl}_2(\RR)$, $K = \SO_2(\RR)$ and $U(\lieg)$ be the universal enveloping algebra of $\lieg$. We denote
$$
h_\circ = \left(
\begin{array}{cc}
1 & 0 \\
0 & -1
\end{array}
\right), \; 
x_\circ = \left(
\begin{array}{cc}
0 & 1 \\
0 & 0
\end{array}
\right),\;
y_\circ = \left(
\begin{array}{cc}
0 & 0 \\
1 & 0
\end{array}
\right).
$$

It is well-known that any continuous finite dimensional representation of $K$ over $\RR$ is either
\begin{enumerate}
\item[(a)] trivial $V_0 = \RR$, or
\item[(b)] two-dimensional: $V_n = \RR^2$ with the action of $k \in K$ given by  $v \mapsto k^n \cdot v$ for $n \in \NN$, where $\cdot$ denotes the action of the fundamental representation of $K$.
\end{enumerate}

\begin{definition} A real vector space $M$ is a Harish-Chandra $(\lieg, K)$-module if it has a structure $(M, \circ)$ of a finitely generated $U(\lieg)$-module as well as a structure $(M, \,\cdot\,)$ of an admissible representation of $K$ (meaning that $M \cong \oplus_{n \in \NN_0} V_n^{\oplus m_n}$ is a direct sum of finite dimensional continuous representations  of $K$ with finite multiplicities $m_n \in \NN_0$) such that 
$$
\left.\frac{d}{dt}\right|_{t = 0} \exp(t z)\cdot  v  = z \circ v 
$$
and
$$
  \bigl(\Ad_k(a)\bigr) \circ v
=
  k \cdot \bigl(a \circ (k^{-1} \cdot v)\bigr)
$$
for any~$v \in M$, $k \in K$, and~$a \in \lieg$, where~$\Ad$ is the adjoint action of~$\SL_2(\RR)$ on~$\lieg$. The corresponding category of Harish-Chandra modules is denoted by $\HC(\lieg, K)$. 
\end{definition}
Let $c = h_\circ^2 - 2h_\circ + 4x_\circ y_\circ  = h_\circ^2 + 2h_\circ + 4y_\circ x_\circ \in U({\lieg})$ be the Casimir element of $\lieg$. Then $\RR[c]$ is the center of $U(\lieg)$. 
The category $\HC(\lieg, K)$ splits into a coproduct of blocks:
\begin{equation}\label{E:blocks}
\HC(\lieg, K) = \coprod_{\gamma  \in \Spec_\circ\bigl(\RR[c]\bigr)} \HC_{\gamma}(\lieg, K),
\end{equation}
where $\Spec_\circ\bigl(\RR[c]\bigr)$ is the set of non-zero prime ideals in $\RR[c]$ and $\HC_{\gamma}(\lieg, K)$
is the subcategory of such modules $M$ that $\ga^mM=0$ for some $m$. 
Of major interest is the so-called \emph{principal block}
$
\HC_{\circ}(\lieg, K) 
$
whose objects are  those objects $M$ of $\HC(\lieg, K)$ for which 
there exists $m \in \NN$ (depending on $M$) such that $c^m \cdot M = 0$.

Let $\tilde{\lieg} = \mathfrak{sl}_2(\CC)$ be the complexification of $\lieg$
and 
$$
h = \left(
\begin{array}{cc}
0 & -i \\
i & 0
\end{array}
\right), \; 
x = \frac{1}{2}\left(
\begin{array}{cc}
1 & i \\
i & -1
\end{array}
\right),\;
y = \frac{1}{2}\left(
\begin{array}{cc}
1 & -i \\
-i & -1
\end{array}
\right).
$$
Then $\tilde{\lieg} = \bigl\langle h, x, y\bigr\rangle_\CC$ and 
\begin{equation}\label{E:Standard}
[h, x] = 2x, [h, y] = -2y, [x, y] = h.
\end{equation}
Moreover, $c = h^2 - 2h + 4x y  = h^2 + 2h + 4y x$ is another expression for the Casimir element introduced above. 
Let $\tilde{K}=\SO_2(\CC)$ be the complexification of $K$, i.e.
$$
\tilde{K} = 
\left\{k = \left.\left(\begin{array}{cc} \alpha & - \beta \\ \beta & \alpha \end{array}\right) \right| \alpha, \beta \in \CC, \mathsf{det}(k) = 1\right\}.
$$

\begin{remark}
Let $M$ be a left $U(\tilde{\lieg})$-module. For any $n\in \ZZ$ we denote $M_n := \bigl\{v \in M \,|\, h \circ v = nv \bigr\}$. Then $M$ is a Harish-Chandra $(\tilde{\lieg}, \tilde{K})$-module if and only if
\begin{enumerate}
\item[(a)] $M$ is finitely generated over $U(\tilde{\lieg})$.
\item[(b)] $M \cong \oplus_{n \in \mathbb{Z}} M_n$ as a complex vector space and $\dim_{\CC}(M_n) < \infty$ for any $n \in \ZZ$. 
\end{enumerate}
\end{remark}

\smallskip
\noindent
Let $M$ be an object of $\HC(\tilde\lieg, \tilde{K})$.
It follows from (\ref{E:Standard}) that $x(M_{l-1}) \subset M_{l+1}$ and 
$y(M_{l+1}) \subset M_{l-1}$ for any $l \in \ZZ$.

Let $\HC_\circ(\tilde{\lieg}, \tilde{K})$ be the full subcategory of 
$\HC(\tilde{\lieg}, \tilde{K})$ consisting of those modules $M$ for which 
there exists $m \in \NN$ (depending on $M$) such that $c^m M = 0$. 

\begin{proposition}\label{T:blocks}
The category $\HC_\circ(\tilde{\lieg}, \tilde{K})$ is equivalent to the category of representations of the Gelfand quiver (\ref{E:GelfandQuiver}). 
\end{proposition}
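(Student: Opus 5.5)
Since $c$ is central and acts nilpotently on any object of $\HC_\circ(\tilde{\lieg}, \tilde{K})$, the plan is to attach to each such module $M$ a finite dimensional, nilpotent representation of the Gelfand quiver (\ref{E:GelfandQuiver}). By the Remark above, $M = \oplus_{n \in \ZZ} M_n$ with $\dim_\CC(M_n) < \infty$. The relation $c = h^2 - 2h + 4xy$, evaluated on $M_{n}$, gives $c|_{M_n} = n^2 - 2n + 4xy$ (with $y\colon M_n \to M_{n-2}$ and $x\colon M_{n-2}\to M_n$). Nilpotency of $c$ forces $4xy|_{M_n}$ to have the single eigenvalue $2n - n^2$. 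Similarly, $c = h^2+2h+4yx$ shows that $4yx|_{M_n}$ has the single eigenvalue $-n^2-2n$.

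The key step is a \emph{rigidity} argument for the maps $x\colon M_{n-2}\to M_n$ and $y\colon M_n\to M_{n-2}$ when $n \neq 0$. The composite $xy$ on $M_n$ and the composite $yx$ on $M_{n-2}$ share their nonzero eigenvalues with multiplicities. By the computation above, on $M_n$ the endomorphism $4xy$ has the unique eigenvalue $-n(n-2)$. On $M_{n-2}$, the endomorphism $4yx$ has the unique eigenvalue $-(n-2)^2-2(n-2) = -n(n-2)$, which is consistent. This eigenvalue is nonzero unless $n \in \{0, 2\}$. Hence for $n \notin\{0,2\}$ with $n$ even, $x$ and $y$ are mutually inverse up to an invertible factor, so $M_{n-2}\cong M_n$. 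For $n$ odd the eigenvalue $-n(n-2)$ is always nonzero, so the odd part is a ``rigid chain''. I will check that nilpotency of $c$ kills the odd part entirely: on an odd-weight string there is no place where the chain can stop, while finite generation still has to hold. I expect this to be a genuine subtlety, since the odd part may actually belong to a different block of $\HC$, which would have to be excluded.

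For the even part, the weights $n=-2,0,2$ are the places where the chain can break. The functor will send $M$ to the quiver representation with vertex $\star \mapsto M_0$, $- \mapsto M_{-2}$ and $+ \mapsto M_2$. The arrows are $a_-\colon M_0\to M_{-2}$ given by $y$, $b_-\colon M_{-2}\to M_0$ given by $x$, $a_+\colon M_0\to M_{2}$ given by $x$ and $b_+\colon M_2\to M_0$ given by $y$, possibly rescaled by constants. On $M_0$ we have $c = 4xy = 4yx$, so after rescaling the relation $a_-b_- = a_+b_+$ holds up to the orientation convention, and it is nilpotent.

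Conversely, from a nilpotent representation $(V_-, V_\star, V_+)$ I reconstruct $M$ by setting $M_{2k} = V_+$ for $k\ge1$ and $M_{2k} = V_-$ for $k\le -1$. The transition maps are chosen so that $xy|_{M_n} = \tfrac14(c - n^2+2n)$, where $c$ on $V_\pm$ is defined by the quiver composites $a_\pm$-then-$b_\pm$ shifted appropriately. The resulting module is finitely generated because it is generated by $M_{-2}\oplus M_0\oplus M_2$. Full faithfulness follows because any morphism of Harish-Chandra modules preserves weight spaces and commutes with $x,y$, hence it is determined by its restriction to $M_{-2}, M_0, M_2$, and any quiver morphism extends uniquely along the isomorphisms $M_n\cong M_{n\pm2}$. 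I expect the main obstacle to be the bookkeeping of scalars needed to make the reconstructed $x,y$ satisfy $[x,y]=h$ exactly on the far weight spaces, together with the treatment of the odd part noted above.
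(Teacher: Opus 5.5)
Your treatment of the even weights is correct and is essentially the paper's argument. The paper restricts $M$ to $M_{-2}\oplus M_0\oplus M_2$ with the maps $x,y$ and asserts that this gives an equivalence. Your eigenvalue computation, showing that $x\colon M_{n-2}\to M_n$ is bijective for even $n\notin\{0,2\}$, and your reconstruction of the spaces $M_{2k}$ supply exactly the checks the paper omits. With the paper's labelling in (\ref{E:GelfandQuiver}), take $a_-=x|_{M_{-2}}$, $b_-=y|_{M_0}$, $b_+=x|_{M_0}$ and $a_+=y|_{M_2}$. Then $a_-b_-=xy|_{M_0}=\tfrac14 c|_{M_0}=yx|_{M_0}=a_+b_+$ holds on the nose, with no rescaling.

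\textbf{The gap is the odd part.} The step ``nilpotency of $c$ kills the odd part'' is false, so that part of your plan cannot be carried out. Take $M=\bigoplus_{n\ \mathrm{odd}}\CC v_n$ with $hv_n=nv_n$, $xv_n=v_{n+2}$ and $yv_n=-\tfrac{n(n-2)}{4}v_{n-2}$.
\begin{itemize}
\item It is a $\tilde\lieg$-module: $[x,y]v_n=\tfrac{-n(n-2)+(n+2)n}{4}\,v_n=nv_n$.
\item It has $c=0$: $cv_n=\bigl(n^2-2n-n(n-2)\bigr)v_n=0$.
\item It is irreducible, hence cyclic, since no coefficient vanishes for odd $n$. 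So finite generation does not force the odd chain to stop. (This is the odd principal series with the infinitesimal character of the trivial representation.)
\end{itemize}
Thus $M$ lies in $\HC_\circ(\tilde\lieg,\tilde K)$ as defined by $c^mM=0$, yet $M_{-2}=M_0=M_2=0$.

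Your second guess is the right one. Since $x,y$ shift weights by $2$, every object splits as $M^{\mathrm{ev}}\oplus M^{\mathrm{odd}}$ as a $\tilde\lieg$-module. By your own rigidity computation, the odd objects are equivalent to finite dimensional $\CC\llbracket t\rrbracket$-modules via $M\mapsto (M_1, c|_{M_1})$. Consequently, with the literal definition, $\HC_\circ(\tilde\lieg,\tilde K)$ has four simple objects, while finite dimensional nilpotent representations of the Gelfand quiver have three, so no equivalence exists.

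What can be proved is the statement for the full subcategory of modules with only even weights, i.e.\ the block of the trivial representation. This is also what the paper's proof tacitly uses: its weight diagram contains only the spaces $M_{2k}$. You should state the result in that form and drop the attempt to show that the odd part vanishes.
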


\begin{proof}
Consider $M \in \HC_{\circ}(\tilde{\lieg}, \tilde{K})$. Then the $\tilde{\lieg}$-module structure on $M$ is determined by the following diagram 

\begin{equation}\label{E:DiagramHCmodule}
\xymatrix{
 \dots  M_{-4} \;  \ar@/^/[r]^{x}
&  \ar@/^/[l]^{y} M_{-2} \ar@/^/[r]^{x} & M_{0} \ar@/^/[l]^{y}  \ar@/^/[r]^{x} & M_2 \ar@/^/[l]^{y}  \ar@/^/[r]^{x}&  \ar@/^/[l]^{y}   M_4 \; \dots 
}
\end{equation}
We have a functor 
\begin{equation}
\HC_{\circ}(\tilde{\lieg}, \tilde{K}) \xrightarrow{\II} \RepQ(O),
\end{equation}
assigning  to a Harish-Chandra module $M$ (given by the diagram (\ref{E:DiagramHCmodule}))  the following representation
\begin{equation}\label{E:HCtoRepsgelfand}
\xymatrix{
M_{-2} \ar@/^/[r]^{x}
& \ar@/_/[r]_{x} \ar@/^/[l]^{y} M_0    &  
M_2 \ar@/_/[l]_{y}
}
\end{equation}
of the Gelfand quiver (\ref{E:GelfandQuiver}). It is well-known and not difficult to check  that $\II$  is an equivalence of categories.
\end{proof}

\begin{remark}
Note that $O$ is isomorphic to the arrow ideal completion of the path algebra of the Gelfand quiver (\ref{E:GelfandQuiver}). This isomorphism is given by  the following identifications:
\begin{equation}\label{E:IdentifGelfandQuiver}
\left\{
\begin{array}{l}
\varepsilon_- = 
\left(
\begin{array}{ccc}
1 & 0 & 0 \\
0 & 0 & 0 \\
0 & 0 & 0
\end{array}
\right), \; 
\varepsilon_+ = 
\left(
\begin{array}{ccc}
0 & 0 & 0 \\
0 & 1 & 0 \\
0 & 0 & 0
\end{array}
\right), \; \varepsilon_\ast = 
\left(
\begin{array}{ccc}
0 & 0 & 0 \\
0 & 0 & 0 \\
0 & 0 & 1
\end{array}
\right)
\\
a_+= \left(
\begin{array}{ccc}
0 & 0 & 0 \\
0  & 0 & 0 \\
t & 0 & 0
\end{array}
\right), \;
a_- = \left(
\begin{array}{ccc}
0 & 0 & 0 \\
t & 0 & 0 \\
0 & 0 & 0
\end{array}
\right),
\\
b_+= \left(
\begin{array}{ccc}
0 & 0 & 1 \\
0 & 0 & 0 \\
0 & 0 & 0
\end{array}
\right), \;
b_- = \left(
\begin{array}{ccc}
0 & 0 & 0 \\
0 & 0 & 1 \\
0 & 0 & 0
\end{array}
\right).
\end{array}
\right.
\end{equation}
In these terms, we get an equivalence of categories
\begin{equation}\label{E:GelfandEq}
\RepQ(O) \stackrel{\EE}\lar \Rep(O).
\end{equation}
Next, the $\CC$-algebra $O$  has an $\RR$-linear involution $O \stackrel{\sigma}\lar O$, given by the formula
\begin{equation}\label{E:Involution}
\sigma(\varepsilon_\star) = \varepsilon_\star, \; \sigma(\varepsilon_\pm) = \varepsilon_\mp, \; 
\sigma(a_\pm) = a_\mp, 
\sigma(b_\pm) = b_\mp \;  \mbox{\rm and} \; \sigma(\lambda) = \bar\lambda \; \mbox{\rm for all}\; \lambda \in \CC
\end{equation} 
Hence, it induces an involutive $\RR$-linear auto-equivalence 
$
\Rep(O) \stackrel{\sigma^\sharp}\lar \Rep(O).
$
It is easy to see that under (\ref{E:GelfandEq}), it corresponds to the $\RR$-linear equivalence $\RepQ(O) \stackrel{(-)^\ddagger}\lar \RepQ(O)$ given by the rule
\begin{equation}\label{E:ComplexConj}
\xymatrix
{
V_- \ar@/^/[rr]^{A_{-}}  & &  V_\star \ar@/^/[ll]^{B_{-}}
 \ar@/_/[rr]_{B_{+}}
 & &
\ar@/_/[ll]_{A_{+}} V_+
}
 \quad \stackrel{(-)^\ddagger}\longmapsto \quad
 \xymatrix
{
V_+ \ar@/^/[rr]^{A_+^\ddagger}  & &  V_\star \ar@/^/[ll]^{B_{+}^\ddagger}
 \ar@/_/[rr]_{B_{-}^\ddagger}
 & &
\ar@/_/[ll]_{A_{-}^\ddagger} V_-
}
\end{equation}
where $A_\pm^\ddagger$ (resp. $B_\pm^\ddagger$) is the complex conjugate map of $A_\pm$ (resp. $B_\pm$). 
\end{remark}

\smallskip
\noindent
The proof of the following statement is straightforward. 
\begin{lemma}\label{L:ComplexConj} The following diagram of categories and functors is commutative:
$$
 \begin{xy}
\xymatrix{
 \HC_\circ(\tilde{\lieg}, \tilde{K})  \ar[rr]^{\II} \ar[d]_-{(-)^\dagger}
&& \RepQ(O) \ar[d]^-{(-)^\ddagger} \ar[rr]^{\EE}  & & \Rep(O) \ar[d]^-{\sigma^\sharp}\\
 \HC_\circ(\tilde{\lieg}, \tilde{K})  \ar[rr]^{\II} && \RepQ(O) \ar[rr]^{\EE}  & & \Rep(O).
}
\end{xy}
$$
\end{lemma}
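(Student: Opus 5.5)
The commutativity of the diagram splits into two squares, and I will check each one separately. The right square, $\sigma^\sharp \circ \EE = \EE \circ (-)^\ddagger$, is essentially the assertion already stated in the Remark preceding the lemma. I will still verify it directly from the identifications (\ref{E:IdentifGelfandQuiver}).

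For a representation $(V_-, V_\star, V_+; A_\pm, B_\pm)$, the module $\EE(\ldots)$ is the space $V_- \oplus V_+ \oplus V_\star$. On it, $\varepsilon_\pm, \varepsilon_\star$ act as the projections and $a_\pm, b_\pm$ act by the corresponding maps. Twisting by $\sigma$ means letting $z \in O$ act through $\sigma(z)$. This action is $\CC$-linear only after we replace the complex structure by its conjugate, i.e. we pass to the conjugate space $\overline{V_-} \oplus \overline{V_+} \oplus \overline{V_\star}$, on which the original maps become the complex-conjugate maps $A_\pm^\ddagger$, $B_\pm^\ddagger$.

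Since $\sigma$ swaps $\varepsilon_-$ and $\varepsilon_+$, as well as $a_\pm$ with $a_\mp$ and $b_\pm$ with $b_\mp$, the vertex labelled $-$ in $\sigma^\sharp\EE(V)$ is the conjugate of $V_+$, and similarly for the arrows. This gives exactly $\EE$ applied to the representation on the right of (\ref{E:ComplexConj}). On morphisms, both functors send an $\RR$-linear family of maps to the same underlying maps, so the square commutes strictly and not merely up to isomorphism.

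For the left square, I first need to fix the meaning of $(-)^\dagger$ on $\HC_\circ(\tilde\lieg,\tilde K)$. I take it to be the conjugate module twisted by the antilinear automorphism of $\tilde\lieg$ given by complex conjugation relative to the real form $\lieg$. That is, $M^\dagger = \overline{M}$ as a set, with $z \circ^\dagger v = \bar z \circ v$, where $\bar{\ }$ is the conjugation of $\mathfrak{sl}_2(\CC)$ fixing $\mathfrak{sl}_2(\RR)$, together with the conjugated $\tilde K$-action.

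The key computation is the effect of this conjugation on the basis $h, x, y$. Since $h = \left(\begin{smallmatrix} 0 & -i \\ i & 0\end{smallmatrix}\right)$, entrywise conjugation gives $\bar h = -h$. From the explicit matrices, $\bar x = y$ and $\bar y = x$. Consequently, if $h\circ v = nv$ in $M$, then in $M^\dagger$ we have $h \circ^\dagger v = -h\circ v = -nv$, and scalars are conjugated. Hence $(M^\dagger)_n = \overline{M_{-n}}$. On these spaces, $x$ acts in $M^\dagger$ as $y$ acts in $M$, viewed as a map $\overline{M_{-n}} \to \overline{M_{-n-2}}$, and $y$ acts as $x$ does.

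Applying $\II$, which reads off the vertices $M_{-2}, M_0, M_2$, the space at vertex $-$ becomes $\overline{M_2}$, the space at $+$ becomes $\overline{M_{-2}}$, and the space at $\star$ becomes $\overline{M_0}$. The arrows $x$ and $y$ are interchanged between the two sides and replaced by their conjugates.

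The step I expect to need the most care is matching this with the labelling convention in (\ref{E:ComplexConj}). That formula swaps $V_-$ with $V_+$ and conjugates the maps, and I must check that it uses $A$ for arrows into $V_\star$ and $B$ for arrows out, consistently with how $\II$ labels the maps $x\colon M_{-2}\to M_0$ and $y\colon M_0 \to M_{-2}$ as $A_-, B_-$, and $y\colon M_2\to M_0$ and $x\colon M_0\to M_2$ as $A_+, B_+$. Once the labelling is fixed, the swap $x \leftrightarrow y$ together with $M_n \leftrightarrow M_{-n}$ sends the arrow $M_{-2}\xrightarrow{x} M_0$ to $\overline{M_2} \xrightarrow{y} \overline{M_0}$. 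This is the conjugate of $A_+$ placed at vertex $-$, as required.

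The nilpotency of the Casimir is preserved, because $\bar c = c$: indeed $c$ lies in $U(\lieg)$. Morphisms are again identical on underlying maps, so the left square commutes as well, and together the two squares give the lemma.
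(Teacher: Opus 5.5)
Your proposal is correct and is the direct verification the paper intends when it calls this lemma straightforward; the paper gives no further argument. Your reading of $(-)^\dagger$ as Galois conjugation relative to $\lieg$, the identities $\bar h=-h$, $\bar x=y$, $\bar y = x$ giving $(M^\dagger)_n=\overline{M_{-n}}$, and the matching of arrows with (\ref{E:ComplexConj}) all check out, with one minor quibble: the right square commutes only up to the canonical swap of the summands $\overline{V_-}$ and $\overline{V_+}$, not literally on the nose as you claim.
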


\smallskip
\noindent
Let $G = \mathsf{Gal}(\CC/\RR) = \langle \sigma | \sigma^2 = e\rangle$. Then we have a group homomorphism
$G \stackrel{\phi}\lar  \Aut_{\RR}(O)$ assigning to $\sigma$ the automorphism $\sigma$ given by (\ref{E:Involution}). It follows from \cite[Theorem 4.1]{BurbanDrozdSurvey} that the crossed product $ O[G, \phi]$ is again a real nodal order. 

\begin{lemma}\label{L:MoritaEq}
The order $B:= O[G, \phi]$ is Morita equivalent to the real Gelfand order $A$. 
\end{lemma}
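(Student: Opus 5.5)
The plan is to reduce the statement to Galois descent together with Proposition \ref{P:GaloisGroup}. Write $O_0 := \{a \in O \mid \sigma(a) = a\}$ for the fixed points of the involution (\ref{E:Involution}). Since $\sigma$ is $\RR$-linear and $\CC$-antilinear, any $a \in O$ decomposes as $a = \tfrac12\bigl(a + \sigma(a)\bigr) + i \cdot \tfrac{1}{2i}\bigl(a - \sigma(a)\bigr)$, with both summands' coefficients in $O_0$. Hence $O = O_0 \oplus i O_0$ and the multiplication map $\CC \otimes_\RR O_0 \to O$ is an isomorphism of $\RR$-algebras. Moreover, $O_0$ commutes with $\sigma$, so inside $B = O[G,\phi]$ the subalgebra $O_0$ commutes with the element $[\sigma]$ and with $\CC \subset O$. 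Therefore the subalgebras $O_0$ and $\CC[G,\phi] \subset B$ commute elementwise.

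This gives an algebra homomorphism $O_0 \otimes_\RR \CC[G,\phi] \to B$. I will check that it is bijective by comparing $\RR$-bases: $B = O \oplus O[\sigma] = (O_0 \oplus iO_0) \oplus (O_0 \oplus iO_0)[\sigma]$, which is exactly the image of $O_0 \otimes \langle 1, i, [\sigma], i[\sigma]\rangle_\RR$. By Proposition \ref{P:GaloisGroup}, $\CC[G,\phi] \cong M_2(\RR)$, and hence
\[
B \cong O_0 \otimes_\RR M_2(\RR) \cong M_2(O_0).
\]
So $B$ is Morita equivalent to $O_0$.

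It remains to identify $O_0$ with the real Gelfand order $A$. Using the presentation (\ref{E:IdentifGelfandQuiver}), I define a map $A \to O_0$ on the generators from (\ref{E:KeyElements1}) and (\ref{E:KeyElements}) as follows. Send $f \mapsto \varepsilon_\star$, $e \mapsto \varepsilon_+ + \varepsilon_-$ and $\jmath \mapsto i(\varepsilon_+ - \varepsilon_-)$; the last assignment satisfies $\jmath^2 = -e$. Send the $y_k$ to the $\sigma$-invariant combinations $b_+ + b_-$ and $i(b_+ - b_-)$, and the $x_k$ to $a_+ + a_-$ and $i(a_- - a_+)$, possibly up to scalar factors $\tfrac12$ and signs. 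These must be chosen so that the relations $x_1 y_1 = x_2 y_2$, $x_1 y_2 = 0 = x_2 y_1$ and the $\jmath$-equivariance relations encoded in the category $\RepQ(A)$ of Proposition \ref{GelfandOrderLAProblem} hold. Such a choice reflects $O \cong \CC \otimes_\RR A$ as stated in the introduction.

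A cleaner way to organize this step is to pass to complete path algebras. By Proposition \ref{GelfandOrderLAProblem}, $A$ is the completed path algebra with vertex algebras $\CC$ and $\RR$, arrows $x_k, y_k$ and the above relations. After tensoring with $\CC$, the vertex $\CC \otimes_\RR \CC \cong \CC \times \CC$ splits into $\varepsilon_\pm$, and this recovers the Gelfand quiver (\ref{E:GelfandQuiver}). One then checks that complex conjugation on the $\CC$-factor corresponds to $\sigma$. Granting this, $O_0 = (\CC \otimes_\RR A)^{\mathsf{conj}} = A$.

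The main obstacle is this last identification: choosing the images of $x_k, y_k$ with the right signs and factors so that the relation $X_1 Y_1 = X_2 Y_2$ and the twisted compatibility ($T$ versus $T^\ddagger$) match the relation $a_- b_- = a_+ b_+$, and so that the central element $t$ corresponds correctly. I would fix this by computing $\CC \otimes_\RR (A/J)$ and $\CC \otimes_\RR (J/J^2)$ first, matching the idempotents and the arrows degree by degree. Surjectivity onto $O_0$ then follows by a rank comparison: both sides are free $R$-modules whose complexifications are isomorphic to $O$.
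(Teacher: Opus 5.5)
Your argument is correct in its main line, but it takes a different route from the paper's proof.

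The paper works inside $B$. It decomposes $1=\varepsilon_++\varepsilon_-+e_\star^++e_\star^-$ into two pairs of idempotents that are conjugate via $[\sigma]$. Hence $\varepsilon=\varepsilon_++e_\star^+$ is a full idempotent, and $B$ is Morita equivalent to the corner $\varepsilon B\varepsilon$, which the paper then asserts is isomorphic to $A$.

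You instead apply Galois descent to the algebra itself. Because $\sigma$ restricts to complex conjugation on the central subfield $\CC\subset O$, you get $O=\CC\otimes_\RR O_0$. Hence $B\cong O_0\otimes_\RR\CC[G,\phi]\cong M_2(O_0)$ by Proposition~\ref{P:GaloisGroup}. Your verification of this step is complete: the two subalgebras commute, and $B=O_0\oplus iO_0\oplus O_0[\sigma]\oplus iO_0[\sigma]$.

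This is essentially the strategy of the Remark following the lemma (Crawley-Boevey's argument), with two differences. You prove the Morita equivalence $B\sim O^G=O_0$ directly instead of quoting the general result on skew group algebras. You also get the sharper statement that $B$ is a full $2\times 2$ matrix algebra over $O_0$. As in that Remark, everything then rests on $O_0\cong A$. This becomes a fixed-point computation in $M_3(\CC\llbracket t\rrbracket)$ rather than the computation of a corner of a crossed product.

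That identification is not yet a proof in your write-up. The elements you propose are the right ones. With your signs, $x_1\mapsto a_++a_-$, $x_2\mapsto i(a_--a_+)$, $y_1\mapsto b_++b_-$, $y_2\mapsto i(b_+-b_-)$ and $\jmath\mapsto i(\varepsilon_+-\varepsilon_-)$ are $\sigma$-invariant. They satisfy the same relations as the matrices (\ref{E:KeyElements1}), (\ref{E:KeyElements}): $x_1y_1=x_2y_2$, $x_1y_2=0=x_2y_1$, $x_1\jmath=-x_2$, $\jmath y_1=y_2$. A factor $\tfrac12$ on the $y_k$ makes $t\mapsto t$.

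There are two problems with the rest of this step.
\begin{enumerate}
\item[(a)] Prescribing a homomorphism on generators requires a presentation of $A$. Proposition~\ref{GelfandOrderLAProblem} does not supply one; it is an equivalence of module categories.
\item[(b)] The surjectivity argument is wrong as stated. An injective $R$-linear map between free $R$-modules of equal rank need not be onto (multiplication by $t$ is an example). Knowing that both complexifications are abstractly isomorphic to $O$ says nothing about the particular map. You would need surjectivity modulo $t$ plus Nakayama, or that the complexified map $\CC\otimes_\RR A\to O$ is itself an isomorphism.
\end{enumerate}

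The cleanest repair avoids generators altogether. Take $a_\pm,b_\pm$ to be the evident multiples of matrix units. Then $\sigma$ acts on $O\subset M_3(\CC\llbracket t\rrbracket)$ by $X\mapsto P\,\kappa(X)\,P$, where $P$ is the permutation matrix exchanging the indices of $\varepsilon_+$ and $\varepsilon_-$, and $\kappa$ conjugates all coefficients. For $W=\left(\begin{smallmatrix}1&i&0\\1&-i&0\\0&0&1\end{smallmatrix}\right)$ one has $P\,\kappa(W)=W$. So $Y\mapsto WYW^{-1}$ maps $M_3(\RR\llbrace t\rrbrace)$ isomorphically onto the $\sigma$-fixed part of $M_3(\CC\llbrace t\rrbrace)$. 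An entrywise check then gives $W^{-1}O_0W=A^\circ$, which is isomorphic to $A$ via $\mathsf{Ad}_Z$. The paper itself only invokes ``a straightforward computation'' at the corresponding step.
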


\begin{proof} Let $\varepsilon_\pm$ and $\varepsilon_\star$ be the primitive idempotents of $O$ given by (\ref{E:IdentifGelfandQuiver}). Then $e_\star^{\pm} = \dfrac{1 \pm i[\sigma]}{2} \varepsilon_\star$ is a pair of orthogonal idempotents in $B$ and $e_\star^+ + e_\star^- = e_\star$. It follows from   (\ref{E:SkewProductRule}) that 
$[\sigma] e_\star^\pm = e_\star^\mp [\sigma]$, hence the idempotents $e_{\star}^+$ and $e_{\star}^-$ are conjugate in $B$. Let $\varepsilon = e_{\star}^+ + e_+$ and $B'=\varepsilon B \varepsilon$. Then 
$B$ and $B'$ are Morita equivalent. A straightforward computation shows that 
$B' \cong A$ as $\RR$-algebras. 
\end{proof}

\begin{remark} The following proof of Lemma~\ref{L:MoritaEq} was communicated to us by Bill Crawley-Boevey. It follows from \cite[Proposition 1.2]{ARS} that $O[G, \phi]$ is Morita equivalent to the $\RR$-algebra of invariants $O^G$. We have the following invariant elements:
$$
e = \varepsilon_+ + \varepsilon_-, \, \jmath = i(\varepsilon_+ - \varepsilon_-), \, f = \varepsilon_\star, \, x = a_+ + a_- \; \mbox{\rm and} \; y = b_+ + b_-.
$$
Note that  $\jmath^2 = -e$, hence $\langle e, \jmath\rangle_\RR \cong \CC$. Moreover,
$$
x \jmath  y = (a_+ + a_-) \jmath (b_+ + b_-) = 0,
$$
since $a_+ b_+  = a_- b_-$.  Put:
$$
x_1 = x, \; y_1 = y, \; x_2 = x \jmath \; \mbox{\rm and}\; y_2 = -\jmath y.
$$
Then we have: $x_1 y_1 = x_2 y_2$, $x_1 y_2 = 0 = x_2 y_1$. Identifying the elements $e, f, \jmath, x_1, x_2, y_1, y_2$ with the corresponding elements given by (\ref{E:KeyElements1}) and 
(\ref{E:KeyElements}), we obtain the anticipated isomorphism $O^G \cong A$. 
\end{remark}

\begin{theorem}\label{T:BlockDescr} The $\RR$-linear abelian categories $\HC_\circ(\lieg, K)$ and  $\Rep(A)$ are equivalent.
\end{theorem}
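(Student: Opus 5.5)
The plan is to chain together the Galois descent formalism of Section \ref{SC:CatConstr} and the description of the complex principal block. First I compare real and complex Harish-Chandra modules. For $M \in \HC_\circ(\lieg, K)$ the complexification $\CC \otimes_\RR M$ is an object of $\HC_\circ(\tilde\lieg, \tilde K)$: it is finitely generated over $U(\tilde\lieg)$, its $K$-isotypic components split into $h$-weight spaces of finite dimension, and $c^m$ still annihilates it. Conversely, complex conjugation $(-)^\dagger$ defines a $G$-action, $G = \mathsf{Gal}(\CC/\RR)$, on $\HC_\circ(\tilde\lieg,\tilde K)$.

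The cleanest way to make this precise is to realise both blocks as module categories over algebras with the same $G$-twist. Let $\Gamma_\RR$ be a suitable completed real algebra with $\HC_\circ(\lieg,K) \simeq \Gamma_\RR\mbox{--}\mathsf{mod}$; morally $\Gamma_\RR$ is the $c$-adic completion of the Hecke-type algebra cut out by the relevant $K$-types. Then $\CC\otimes_\RR \Gamma_\RR$ is the corresponding complex algebra, which by Proposition \ref{T:blocks} and (\ref{E:GelfandEq}) is (Morita equivalent to) $O$, and the conjugation on it is $\sigma$ by Lemma \ref{L:ComplexConj}. Rather than constructing $\Gamma_\RR$ abstractly, I would argue directly as in Proposition \ref{T:blocks}. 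A real block module is determined by its $K$-isotypic pieces $M_{V_0}$ and $M_{V_2}$, together with the maps between them induced by $\lieg$ (the operator $h_\circ$ and its $\Ad_K$-rotates). Here $M_{V_2}$ carries the complex structure coming from the action of $\mathfrak{so}_2$, exactly as in the proof of Proposition \ref{GelfandOrderLAProblem}.

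Next I identify the resulting linear-algebra data with $\RepQ(A)$. Set $V = M_{V_0}$ and $U\oplus U = M_{V_2}$ with $\jmath$ acting as the generator of $\mathfrak{so}_2$ scaled by $1/2$. The components of $x_\circ - y_\circ$ and $h_\circ$ (the noncompact part of $\lieg$) mapping $V \to M_{V_2}$ and back give $(Y_1,Y_2)$ and $(X_1,X_2)$. Complexifying, $M_{V_2}\otimes\CC = M_2\oplus M_{-2}$ and $M_{V_0}\otimes\CC = M_0$. The relations of (\ref{E:GelfandQuiver}) then restrict under conjugation-invariance to $X_1Y_1 = X_2Y_2$ and $X_1Y_2 = 0 = X_2Y_1$, while nilpotency of $c$ gives nilpotency of $X_1Y_1$. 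This is the real version of the identification $O^G \cong A$ in the remark after Lemma \ref{L:MoritaEq}, with $x_1 = x$, $x_2 = x\jmath$, and so on. I also have to check that the remaining weights $M_{\pm 2n}$ for $n\ge 2$ (real $K$-types $V_{n}$) are recovered uniquely from $M_0, M_{\pm2}$, as in the complex case. Invariantly, this shows that restriction to $K$-types $V_0, V_2$ gives a functor $\HC_\circ(\lieg,K)\to\RepQ(A)$, and Proposition \ref{GelfandOrderLAProblem} then lands in $\Rep(A)$.

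To prove this functor is an equivalence I would argue via descent instead of by hand. Complexification gives $\HC_\circ(\lieg,K)\to \HC_\circ(\tilde\lieg,\tilde K)[G,\Phi]$-type data. By Theorem \ref{T:DFO} (with $2$ invertible) and Lemma \ref{L:ExtScalars2}, real objects correspond to $O[G,\phi]$-modules, that is $B$-modules. Lemma \ref{L:MoritaEq} then gives $\Rep(B)\simeq\Rep(A)$, and Lemma \ref{L:ComplexConj} ensures that the $G$-actions on the two sides match. Concretely, $\HC_\circ(\lieg,K) \simeq \Rep(\Gamma_\RR)$ with $\CC\otimes\Gamma_\RR\cong O$ compatibly with $\sigma$, so $\Gamma_\RR \cong O^G \cong A$.

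The main obstacle is this last compatibility: producing a real algebra $\Gamma_\RR$ (equivalently, showing the real block is the category of $G$-equivariant objects in the complex block) whose complexification is $O$ with conjugation precisely $\sigma$. This requires checking that the complexified $h$ swaps under conjugation with $-h$, so that $\sigma(\varepsilon_\pm)=\varepsilon_\mp$, and that $x^\dagger = y$-type identities match $\sigma(a_\pm)=a_\mp$, $\sigma(b_\pm)=b_\mp$. I would verify this directly from the explicit matrices $h, x, y$ above, since $\bar h = -h$ and $\bar x$ is a multiple of $y$.
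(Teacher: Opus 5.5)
Your final descent paragraph is essentially the paper's proof: complexify, identify the induced $G$-action on $\HC_\circ(\tilde\lieg,\tilde K)$ with $\sigma^\sharp$ via Lemma~\ref{L:ComplexConj} (your check $\bar h=-h$, $\bar x=y$ is exactly its content), apply Theorem~\ref{T:DFO} to reach $O[G,\phi]$-modules, and conclude with Lemma~\ref{L:MoritaEq}. The step you leave implicit, $\HC_\circ(\lieg,K)\simeq\bigl(\HC_\circ(\lieg,K)_{\CC}[G]\bigr)^\omega$, is Corollary~\ref{C:GaloisExt} (via $\CC[G,\phi]\cong M_2(\RR)$ and Lemma~\ref{L:IndMatrixAlg}) and should be cited; the $\Gamma_\RR$ and explicit $\RepQ(A)$ detours are unnecessary, and in the latter the noncompact part is spanned by $h_\circ$ and $x_\circ+y_\circ$, since $x_\circ-y_\circ$ spans $\mathfrak{so}_2$.
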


\begin{proof} First note that the functor 
$$
\HC(\lieg, K)_{\CC} \lar \HC(\tilde{\lieg}, \tilde{K}), \; M \mapsto \CC \otimes_{\RR} M
$$
is fully faithful; see Lemma \ref{L:ExtScalars}. Moreover, it induces a $\CC$-linear  equivalence of categories $\HC(\lieg, K)_{\CC}^\omega \simeq 
\HC(\tilde{\lieg}, \tilde{K})$ (see Lemma \ref{L:ExtScalars2}), which restricts to a $\CC$-linear  equivalence $\bigl(\HC_\circ(\lieg, K)\bigr)_{\CC}^\omega \simeq 
\HC_\circ(\tilde{\lieg}, \tilde{K})$. Let $G = \mathsf{Gal}(\CC/\RR)$. Then Corollary \ref{C:GaloisExt} implies that  we have the following $\RR$-linear equivalences of categories:
$$
\HC_\circ(\lieg, K) \simeq \bigl(\HC_\circ(\lieg, K)_{\CC}[G]\bigr)^\omega \simeq 
\bigl(\bigl(\HC_\circ(\lieg, K)_{\CC}\bigr)^\omega[G]\bigr)^\omega \simeq \bigl(\HC_\circ(\tilde\lieg, \tilde{K})[G]\bigr)^\omega. 
$$
Next, by Lemma \ref{L:ComplexConj}, Theorem \ref{T:DFO} and Lemma \ref{L:MoritaEq} we have $\RR$-linear equivalences of categories: 
$$
\bigl(\HC_\circ(\tilde\lieg, \tilde{K})[G]\bigr)^\omega \simeq \bigl(\Rep(O)[G]\bigr)^\omega \simeq O[G]\mbox{--}\mathsf{fdmod} \simeq \Rep(A),
$$
implying the statement. 
\end{proof}

\begin{remark}
A recent work of Januszewski \cite{Janusz} provides another perspective on the description of the principal block $\HC_\circ(\lieg, K)$. 
\end{remark}

\end{document}